\documentclass[12pt]{amsart}  

\usepackage{graphicx}
\usepackage{mathptmx}      
%
\usepackage{amssymb,amsmath}

\usepackage[cmtip,all]{xy}
\usepackage{tikz-cd}
\usepackage[english,activeacute]{babel}

\usepackage{color}
\usepackage{soul} 
\setstcolor{blue}


%
\newtheorem{theorem}{Theorem}[section]    
\newtheorem{proposition}[theorem]{Proposition} 
\newtheorem{corollary}[theorem]{Corollary}  

\theoremstyle{definition}
\newtheorem{definition}[theorem]{Definition}    
\newtheorem*{remark}{Remark}             
\newtheorem{example}[theorem]{Example}

\newcommand{\C}{\mathbb{C}}
\newcommand{\Class}{\mathcal{C}}
\newcommand{\cat}{\mathop{\mathrm{cat}}}
\newcommand{\crit}{\mathop{\mathrm{Crit}}}
\newcommand{\Cut}{\mathop{\mathrm{Cut}}}
\newcommand{\Di}{{\mathrm{D}}}

\newcommand{\Exp}{\mathrm{Exp}}

\newcommand{\grad}{\mathop{\mathrm{grad}}}
\newcommand{\Hess}{\mathop{\mathrm{Hess}}}

\newcommand{\id}{\mathrm{id}}
\newcommand{\NP}{\mathrm{NP}}
\newcommand{\Pa}{\mathrm{P}}

\newcommand{\R}{\mathbb{R}}
\newcommand{\Sp}{\mathrm{Sp}}
\newcommand{\SP}{\mathrm{SP}}
\newcommand{\TC}{\mathop{\mathrm{TC}}}
\newcommand{\cx}{\mathop{\mathrm{cx}}}
\newcommand{\tc}{\mathop{\mathrm{tc}}}

\newcommand{\U}{\mathrm{U}}



\newcommand{\Co}{\Sigma}


%
\begin{document}


\title[Generalized motion planning]{Homotopic distance and generalized motion planning}%



\author{%
	E.~Mac\'{\i}as-Virg\'os    \and
        D.~Mosquera-Lois \and
        M.J.~Pereira-S\'aez 
}


\address{%
		E. Mac\'{\i}as-Virg\'os\\
              Departamento de Matem\'aticas, Universidade de Santiago de Compostela, 		15782-SPAIN}
              \email{quique.macias@usc.es}          
		
 \address{%
           	D.~Mosquera-Lois \\
              Departamento de Matem\'aticas, Universidade de Santiago de Compostela, 15782-SPAIN}
               \email{david.mosquera.lois@usc.es}
		
\address{%
           	M.J.~Pereira-S\'aez\\
              Facultade de Econom\'{\i}a e Empresa, Universidade da 			Coru\~na, 15071-SPAIN}
              \email{maria.jose.pereira@udc.es}

\thanks{The first and third authors were partially supported by MINECO Spain research project MTM2016-78647-P. The first author was partially supported by Xunta de Galicia ED431C 2019/10 with FEDER funds. The second author was partially supported by Ministerio de Ciencia, Innovaci\'on y Universidades, grant FPU17/03443  and Xunta de Galicia ED431C 2019/10 with FEDER funds.}
\maketitle

%

\begin{abstract}
We prove that the homotopic distance between two maps defined on a manifold is bounded above by the sum of their subspace distances on the critical submanifol of any Morse-Bott function. This generalizes the Lusternik-Schnirelmann theorem (for Morse functions), and a similar result by Farber  for the topological complexity. Analogously, we prove that, for analytic manifolds, the homotopic distance is bounded by the sum of the subspace distances on any submanifold and its cut locus.  As an application, we show how navigation functions can be used to solve a generalized motion planning problem.

\medskip
\noindent {\bf Keywords: }{Morse-Bott function; topological complexity; L-S category \and homotopic distance}

\noindent{\bf MSC 2010:}{ 55M30 \and MSC 53C22}
\end{abstract}

\section{Introduction}


Both Lusternik-Schnirelmann category (\cite{CLOT2003}) and Farber's topological complexity (\cite{Fa2003}) can be seen as particular cases of the {\em homotopic distance} between maps, introduced by  the authors in \cite{MM2020}.

The importance of those homotopy invariants  is well known. On one hand, the L-S category of a compact differentiable manifold  gives  a lower bound to the number of critical points of any differentiable function defined on it. On the other hand, the topological complexity  is closely related to  the problem of designing robot motion planning algorithms for a given configuration space. 

In fact, the two  approaches above are connected. As noted by Farber (\cite{Fa2008}), following previous ideas by Koditschek and Rimon (\cite{KR1990}), the negative gradient vector field of a Morse-Bott real valued 
function gives rise to a flow which
moves any given initial condition towards a target critical point. Therefore, so-called {\em navigation functions}  provide motion planning algorithms for moving from
an arbitrary source to an arbitrary target.

It has been repeatedly  observed that  topological complexity shares many   properties with L-S category, and that both invariants lead to similar results. Examples are   formulas involving products, fibrations and cohomological bounds. As shown in \cite{MM2020}, the reason for this phenomenon is that those results can be proven for the homotopic distance between maps.

A paradigmatic result is Lusternik-Schnirelmann's Formula \eqref{LS-FORMULA}, that relates, for a given differentiable function $\Phi\colon M\to \mathbb{R}$, the L-S category of the ambient manifold $M$ with the subspace L-S category of the critical levels of $\Phi$.  While the original proof of this result for arbitrary differentiable functions involves the subtle mini-max principle which is at the heart of  L-S theory, the proof for Morse-Bott functions is much easier and only needs the most basic properties  of   L-S category.
The analogous Formula \eqref{TCFORMULA} for the topological complexity was proved by Farber in \cite[Theorem 4.32]{Fa2008}.

In this paper we shall prove a similar result for the homotopic distance between two maps (Theorem \ref{MORSEBOTT}). This more general formula can then be applied to other invariants such as the {\em topological complexity of a work map}, introduced by Murillo -- Wu (\cite{MW2019}) and  Scott (\cite{Sc2020}), the {\em complexity of a fibration}, defined   by Pavesic in \cite{Pa2017},  and the {\em weak topology} of Yokoi (\cite{Y2018}), which are particular cases of the homotopic distance too.

Finally, we adapt our result to the Morse-Bott function given by the (square of) the distance to a submanifold $N$ in a Riemannian manifold $M$.  We obtain (Theorem \ref{MAINCUT}) that the  homotopic distance between two continuous maps on the manifold is bounded by the sum of the  subspace homotopic distances on the submanifold $N$ and its {\em cut locus} $\Cut N$. Our proof assumes that the manifold is analytic, in order to guarantee that the cut locus is triangulable.

The idea of using Morse-Bott functions to estimate the LS category of some homogeneous spaces comes back to Kadzisa and Mimura (\cite{KM2011}). However, they did not use Formula \eqref{LS-FORMULA}, but instead they constructed  conedecompositions of the
manifold by  the  gradient flows. This gives a cone length, which
is an upper bound of the LS category (\cite[Section 3.5]{CLOT2003}).

In  \cite{MP2013}, the authors used Formula \eqref{LS-FORMULA} directly, once a convenient function  was chosen, to give the upper bound $\cat \Sp(n)\leq (n+1)n/2$ for the L-S category of the
symplectic group. In a similar way, an optimal upper bound was given in \cite{MPT2017} for the LS category of the quaternionic Grassmannians $G_{n,k}=\Sp(n)/(\Sp(k) \times \Sp(n - k))$.

On the other hand, Farber (\cite{Fa2008}) and later Costa (\cite{Co2010}) used Formula \eqref{TCFORMULA} to study {\em navigation functions} on the torus $T^n$, the projective spaces $\R P^n$ and the lens spaces $L(p,q)$, thus bringing new light
into known results regarding topological complexity.

We shall define a {\em general motion planning} problem, meaning that, given two maps $f,g\colon X \to Y$, we need to find,  for each $x\in X$,  a path $s(x)$ on $Y$, depending continuously on $x$ and connecting the points $f(x)$ and $g(x)$. 

This problem can also be  solved  with navigaton functions.  
Navigation functions  
exploit the gradient flow of a Morse-Bott function for constructing motion
planning algorithms.  
Originally, Koditschek and Rimon  (\cite{KR1990}) studied machines that navigate to a fixed goal using a gradient flow technique. Later, Farber (\cite{Fa2008}) considered navigation functions  
which depend on two variables, the source and the target.   We shall adapt his explanation to our generalized setting.

The contents of the paper are as follows. In Section \ref{DOS} we recall the basic definitions of Morse-Bott function, Lusternik-Schnirelmann category and Farber's topological complexity, as well as the classical theory relating the latter two invariants with the critical submanifolds of a Morse-Bott function. In Section \ref{TRES} we recall the definition of {\em homotopic distance} between two continuous maps, introduced by the authors in \cite{MM2020}, and we give a {\em subspace} or {\em relative} version of it (Definition \ref{SUBDIST}). This notion generalizes the   subspace L-S-category (\cite[Definition 1.1]{CLOT2003}) and the relative topological complexity   (\cite[Section 4.3]{Fa2008}). We prove its homotopic invariance  in Proposition \ref{prop:homot_invariance}.
In Section \ref{CUATRO} we study how to compute the homotopic distance between two continuous maps defined on a manifold $M$ on which it is also defined a Morse-Bott function $\Phi$, by reducing the computation to the critical levels of $\Phi$. In order to do that, we first show how to modify our definition of subspace homotopic distance to deal with  {\em Euclidean neighbourhood retracts} (ENRs)   instead of open subsets. Our main result (Theorem \ref{MORSEBOTT}) states that the  homotopic distance on $M$ is bounded above by the sum of the  subspace distances on the critical levels. This result generalizes both the classical Lusternik-Schnirelmann theorem (\cite{LS1934})  and the analogous Farber's result for the topological complexity (\cite[Theorem 4.32]{Fa2008}), whose proof we have adapted to our context. 

In Section \ref{CINCO} we consider a complete Riemannian manifold $M$, a submanifold $N$ and the function given by the (square of) the distance  to $N$. This function turns to be differentiable on  $M\setminus \Cut N$, where $\Cut N$ is the {\em cut locus} of  $N$. We have given a quick survey of the main properties of the cut locus, in order to prove that the homotopic distance between two maps defined on a compact  analytic manifold is bounded by the sum of the subspace homotopic distance on the submanifold $N$ and the subspace homotopic distance on its cut locus $\Cut N$ (Theorem \ref{MAINCUT}). An obvious consequence for the L-S category is (Corollary \ref{CORCAT}):
$$\cat M \leq \cat\nolimits_M(N) + \cat\nolimits_M(\Cut N) +1.$$
Finally, we show in Sections \ref{MOTPLAN} and \ref{NAVIG} how to interpret the preceding results in terms of {\em navigation functions} that solve a {\em  generalized motion planning}. In fact, following the original interpretation of the topological complexity, it happens that the homotopic distance $\Di(f,g)$ between two continuous maps $f,g\colon X \to Y$ measures the difficulty  of finding, for a given $x\in X$,  a continuous path in $Y$ that  connects the points $f(x)$ and $g(x)$. When $X=M$ is a manifold, and there is a Morse-Bott function $\Phi$ defined on it, Theorem \ref{MORSEBOTT} can be interpreted as follows: first, we solve the subspace motion planning problem on each critical submanifold $\Sigma_i$ by means of a covering $G_j^i$ by ENRs. Then, if $x\in M$ belongs to the {\em bassin of atraction} $V_j^i$ of $G_j^i$, we slide along the gradient flow $x(t)$  from $x=x(0)$ to a critical point $\alpha=x(\infty)\in G_j^i$. Since there is a path $\gamma$ connecting $f(\alpha)$ and $g(\alpha)$, we can concatenate the paths $f(x(t))$,$\gamma$ and $g(\bar x(t))$, where $\bar x(t)$ is the reverse path of $x(t)$.

A similar interpretation  is valid for Theorem \ref{MAINCUT}, because there is a Morse-Bott flow collapsing $M\setminus \Cut N$ to $N$.

Finally, we show that these results   apply  not only to L-S category and topological complexity, but also to other invariants like the {\em naive topological complexity} $\tc(f)$ of the work map $f$,
studied by Farber (\cite[p. 5]{Fa2008}) and later by Murillo and Wu (\cite{MW2019}) and by
Scott (\cite[Theorem 3.4]{Sc2020});
the {\em topological
complexity $\cx(f)$ of a fibration} $f$, defined by Pavesic (\cite{Pa2017}); and the {\em weak category} of a continuous map 
$f\colon X \to X$, defined by
Yokoi  (\cite{Y2018}).
\medskip

All along this paper we assume that  manifolds and topological spaces are path-connected, unless otherwise stated.

\section{Basic definitions}\label{DOS}

We begin by recalling the basic facts and notations of Morse-Bott theory. 
Also we recall the  definitions of L-S category and topological complexity.

\subsection{Morse-Bott theory}
Let $M$ be a compact differentiable manifold. The smooth function $\Phi\colon M \to \R$ is called a {\em Morse-Bott function} if the critical set $\crit \Phi$  is 
a disjoint union of connected  submanifolds $\Co$ and for each critical point $p\in \Sigma \subset  \crit \Phi$   the Hessian 
is non-degenerate in the  directions transverse to $\Sigma$. A general reference is Nicolaescu's book (\cite{Ni2007}). For a complete proof of the Morse-Bott Lemma about the local structure of such a function see Banyaga-Hurtubise's paper \cite{BH2004}.

 If
$p$ is a critical point of  $\Phi$, the {\em index} of $p$  is the number of negative eigenvalues of the Hessian $(\Hess \Phi)_p$. This number is constant along any connected critical submanifold $\Co$. If $\varphi\colon M \times \R \to M$ denotes the negative gradient flow of $\Phi$, 
the {\em stable manifold} or {\em basin of attraction} of  $\Co$ is the set
$$S(\Co) = \{p\in M\colon \lim_{t\to +\infty} \varphi(p,t)\in \Co\}.$$
It is well known that the map
$$\pi\colon S(\Co) \to \Co,$$
sending each point $p$ to the limit point of its trajectory,
is a  fibre bundle with fiber $\R^{m-n-k}$, where $m=\dim M$, $n=\dim \Co$ and $k$ is the index of $\Co$. Moreover, $M$ as a set is the disjoint union of the submanifolds $S(\Co)$. Notice, however, that the global limit map $M \to \crit\Phi$ is {\em not} continuous.

\subsection{L-S category}
 A fundamental references for L-S category  is   \cite {CLOT2003}.

Let $X$ be a topological space. A subspace $A\subset X$ is {\em $0$-categorical in $X$} if it can be contracted to a point inside $X$. Equivalently, the inclusion map $A\subset X$ is homotopic to a constant map.

More generally, 
\begin{definition}\label{LSCATEG}
The {\em Lusternik-Schnirelmann} category of $A$ in $X$, denoted by $\cat_X A$,  is the minimum integer $k\geq 0$ such that there is a covering  $U_0\cup\dots\cup U_k=A$, with the property that
each subset $U_j$ is open in $A$. 
\end{definition}

If such a covering does not exist, we write $\cat_X A=\infty$. When $A=X$, we simply write $\cat_X X=\cat X$.

 There is a well known relationship between L-S category and the number of critical points.
The following more elaborated result  already appeared in the 
Lusternik-Schnirelmann's original  work  \cite{LS1934}. We heard about it for the first time in Rudyak-Schlenk's paper  \cite{RS2003}, see also Reeken's work  \cite[p.~21]{Re1972}.

\begin{theorem}\label{LS-CRIT} Let $M$ be a compact smooth manifold. Let $\Phi\colon M \to \R$ be a smooth function with critical values $c_1<\dots<c_p$, and let $\Sigma_i= \Phi^{-1}(c_i)\cap \crit \Phi $ be the set of critical points which lie in the level $\Phi=c_i$. Then 
\begin{equation}\label{LS-FORMULA}
\cat M +1 \leq \sum_{i=1}^p(\cat\nolimits_M \Sigma_i +1).
\end{equation}
\end{theorem}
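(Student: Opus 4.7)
The plan is to combine the definition of the subspace L--S category with the standard deformation properties of the negative gradient flow of $\Phi$. For each critical level $c_i$, write $k_i = \cat\nolimits_M \Sigma_i$ and pick a covering $U_0^i, \ldots, U_{k_i}^i$ of $\Sigma_i$ by subsets open in $\Sigma_i$, each contractible in $M$. The total number of such subsets is $\sum_{i=1}^p (k_i + 1)$, so the aim is to enlarge them into an open cover of all of $M$ by contractible-in-$M$ sets of exactly that cardinality.

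First I would extend each $U_j^i$ to an open subset $W_j^i$ of $M$ that is still contractible in $M$. Since $\Sigma_i$ is compact and closed in $M$, and $M$ is a smooth manifold (hence an ANR), each null-homotopy $U_j^i \times I \to M$ extends, via the homotopy extension property for ANR pairs, to a null-homotopy on some open neighbourhood of $U_j^i$ in $M$. Call that neighbourhood $W_j^i$ and set $W^i = W_0^i \cup \dots \cup W_{k_i}^i$, an open neighbourhood of $\Sigma_i$ in $M$.

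Next I would invoke the classical deformation lemma for smooth functions on compact manifolds: for each $i$ there exists $\epsilon_i > 0$ such that the intervals $[c_i - \epsilon_i, c_i + \epsilon_i]$ are pairwise disjoint and avoid the other critical values, and such that the slab $\Phi^{-1}[c_i - \epsilon_i, c_i + \epsilon_i] \setminus W^i$ is pushed by the negative gradient flow of $\Phi$ strictly below level $c_i - \epsilon_i$ in finite time. Iterating this from $i = p$ down to $i = 1$, and using that $\Phi^{-1}(-\infty, c_1 - \epsilon_1)$ is empty on compact $M$, I obtain a continuous map $\Psi : M \to \bigcup_i W^i$ that is homotopic to the identity through gradient trajectories. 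Setting $V_j^i = \Psi^{-1}(W_j^i)$ then produces an open cover of $M$ with exactly $\sum_{i=1}^p (k_i + 1)$ members, each of which is contractible in $M$: the inclusion $V_j^i \hookrightarrow M$ factors up to homotopy through the contractible-in-$M$ set $W_j^i$. The inequality $\cat M + 1 \leq \sum_i(\cat\nolimits_M \Sigma_i + 1)$ is then immediate.

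The principal technical obstacle is the construction of $\Psi$ in the third step: one has to organise a single continuous deformation of $M$ that simultaneously captures every point into some $W^i$, despite the fact that the global endpoint of the negative gradient flow does \emph{not} depend continuously on the initial condition when trajectories asymptote to different critical sets. The remedy is to alternate ``flow'' and ``capture'' phases in the prescribed order of decreasing $i$, interpolating near each critical slab by means of the local deformation lemma; this is the core of the classical Lusternik--Schnirelmann argument. Everything else is bookkeeping once Step~1 has been carried out.
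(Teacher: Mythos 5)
The paper does not actually prove Theorem~\ref{LS-CRIT}: it defers to \cite{LS1934}, \cite{RS2003} and \cite{Re1972}, and explicitly remarks that the general smooth case requires the minimax machinery of L--S theory, while its own new result (Theorem~\ref{MORSEBOTT}) handles only Morse--Bott functions by a different route (the basins of attraction are then submanifolds, hence ENRs, and one enlarges them to an open cover of $M$ and applies subadditivity). So the comparison is really with the classical L--S argument, and there your Step~3 has a genuine error. A continuous $\Psi\colon M\to \bigcup_i W^i$ with $\Psi\simeq \id_M$ cannot exist unless $\bigcup_i W^i = M$: on a closed manifold any map homotopic to the identity is surjective (it has degree $\pm1$), whereas you intend $\bigcup_i W^i$ to be a small union of neighbourhoods of the critical sets. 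Concretely, for the height function on $S^1$, the set $W^1\cup W^2$ is a pair of disjoint arcs, and any map of $S^1$ into it is null-homotopic, never homotopic to the identity. No amount of alternating ``flow'' and ``capture'' phases will produce such a $\Psi$; the obstacle you flag as ``the principal technical obstacle'' is in fact unsurmountable in the form you propose.

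What the classical argument does instead is establish an \emph{iterative} estimate on sublevel sets, never constructing a single global retraction. The deformation lemma pushes $\Phi^{\le c_i+\varepsilon}\setminus W^i$ down past level $c_i-\varepsilon$, which gives
$\cat\nolimits_M\bigl(\Phi^{\le c_i+\varepsilon}\bigr)\le \cat\nolimits_M\bigl(\Phi^{\le c_{i-1}+\varepsilon}\bigr)+\cat\nolimits_M(\Sigma_i)+1$;
summing from $i=1$ (where $\Phi^{\le c_1-\varepsilon}=\emptyset$) up to $i=p$ yields \eqref{LS-FORMULA}. Working level by level is precisely how one circumvents the discontinuity of the end-point map of the gradient flow. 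Your Step~2 is also stated imprecisely --- the $U_j^i$ are open in $\Sigma_i$, not closed in $M$, so the homotopy extension argument does not apply verbatim; the standard fix is to enlarge the whole categorical cover of the compact set $\Sigma_i$ to an open neighbourhood of $\Sigma_i$ in $M$ with the same $\cat\nolimits_M$, using that $M$ is an ANR --- but that part is repairable. Step~3 as written is not.
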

The following result improves the latter formula by observing that different connected critical submanifolds lying on the same critical level can be aggregated. 
\begin{proposition}Let $C_i^1,\dots, C_i^{n_i}$ be the connected components of $\Sigma_i$. Then 
$$\cat\nolimits_M \Sigma_i=\max_j \cat\nolimits_MC_i^j.$$
\end{proposition}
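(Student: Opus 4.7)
The plan is to prove both inequalities, exploiting the fact that $\Sigma_i$ is a finite disjoint union of its connected components $C_i^1,\dots,C_i^{n_i}$ (finite because $M$ is compact and $\crit\Phi$ is a disjoint union of closed submanifolds). In particular, each $C_i^j$ is clopen in $\Sigma_i$, so $\Sigma_i$ is topologically the disjoint union $\bigsqcup_j C_i^j$.

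For the easy inequality $\max_j \cat_M C_i^j \leq \cat_M \Sigma_i$, I would take an open covering $\Sigma_i = U_0\cup\dots\cup U_k$ by sets $U_\ell$ that are $0$-categorical in $M$, and restrict: $C_i^j = (U_0\cap C_i^j)\cup\dots\cup(U_k\cap C_i^j)$. Each $U_\ell\cap C_i^j$ is open in $C_i^j$, and as a subset of a null-homotopic set in $M$ it is itself null-homotopic in $M$. Hence $\cat_M C_i^j \leq k = \cat_M \Sigma_i$ for every $j$.

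For the reverse inequality, set $k = \max_j \cat_M C_i^j$. For each $j$ choose an open covering $C_i^j = U_0^j\cup\dots\cup U_k^j$ (possibly padding with empty sets) with each $U_\ell^j$ $0$-categorical in $M$, and put $U_\ell := \bigsqcup_j U_\ell^j$. Since $C_i^j$ is open in $\Sigma_i$, so is each $U_\ell^j$, hence $U_\ell$ is open in $\Sigma_i$. Moreover, because the components are clopen, $U_\ell$ carries the disjoint-union topology of its pieces $U_\ell^j$. The core step is to build a single null-homotopy of the inclusion $U_\ell \hookrightarrow M$ out of the given null-homotopies $H_\ell^j\colon U_\ell^j\times I\to M$, with $H_\ell^j(\cdot,1)\equiv p_\ell^j$.

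The main (mild) obstacle is that the endpoints $p_\ell^j$ need not coincide across $j$, so the $H_\ell^j$ do not glue directly. I would overcome this using path-connectedness of $M$: pick a basepoint $p_\ell\in M$ and paths $\gamma_\ell^j$ from $p_\ell^j$ to $p_\ell$, and replace $H_\ell^j$ with its concatenation
\[
\tilde H_\ell^j(x,t) = \begin{cases} H_\ell^j(x,2t), & 0\le t\le 1/2,\\ \gamma_\ell^j(2t-1), & 1/2\le t\le 1,\end{cases}
\]
which ends at the common point $p_\ell$. Because $U_\ell$ is the topological disjoint union of the $U_\ell^j$, these assemble into a continuous homotopy $\tilde H_\ell\colon U_\ell\times I\to M$ contracting $U_\ell$ to $p_\ell$. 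Thus $U_0,\dots,U_k$ is an admissible covering and $\cat_M \Sigma_i \leq k$, which together with the previous inequality gives the claimed equality.
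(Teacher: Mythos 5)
Your proof is correct. A small comparison with what the paper does: the paper states this proposition without proof in Section~2, but it is subsumed by the more general statement for homotopic distance proved in Section~3, namely $\Di_X(X;f,g)=\max_i \Di_X(A_i;f,g)$ where the $A_i$ are the connected components of $X$; one recovers the present statement by taking $X=\Sigma_i$, $f=i_{\Sigma_i}\colon\Sigma_i\hookrightarrow M$ and $g$ the constant map at a fixed $x_0\in M$. The engine in both arguments is the same --- since the components are clopen, covers and homotopies on different components glue with no interaction --- but there is one genuine difference worth noting. In the homotopic-distance formulation the ``target'' map $g$ is the \emph{same} globally defined map restricted to each piece, so the homotopies $H^j$ automatically end at compatible maps and glue directly; there is no basepoint mismatch to resolve. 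In your direct $\cat_M$ argument you allow each $U_\ell^j$ to contract to an arbitrary point $p_\ell^j$, which forces the path-concatenation step to align endpoints. That step is correct, but it can be avoided by invoking at the outset that, for path-connected $M$, $\cat_M(A)=\Di(i_A,c_{x_0})$ for a fixed $x_0$; then each contracting homotopy may be assumed to end at $x_0$, and the disjoint-union glueing is immediate. So your proof is a slightly more hands-on version of the same idea, trading the reduction to a fixed constant for an explicit path-correction of endpoints.
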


 \subsection{Topological complexity}
 The fundamental reference for topological complexity is \cite{Fa2003}.
 
 The following definiton is the original one in Farber's paper  \cite[Definition 4.20]{Fa2008}, although we have normalized it. Also, we  have chosen  to say ``{\em subspace} topological complexity'' instead of ``{\em relative} topological complexity''.
 
 \begin{definition}
\label{TOPCOMPLEX}Let $X$ be a topological space and $A \subset X \times X$ be a
subspace. The (normalized) {\em subspace topological complexity}, denoted by $\TC_X(A)$, is 
 the smallest integer $k\geq 0$ such that there
is a cover $U_0 \cup \cdots \cup U_k= A$ with the property that each
$U_j \subset A$ is open in $A$,
and the projections  $U_j \rightrightarrows X$ on the first and the
second factors are homotopic to each other.
 \end{definition}

We simply write $\TC_X(X\times X)=\TC(X)$.\\
 
 The following result is due to Farber (\cite[Theorem 4.32]{Fa2008}).

\begin{theorem}\label{TC-CRIT}Let $M$ be a compact smooth manifold (without boundary). Let $\Phi\colon M \times M \to \R$ be a
Morse-Bott  function such that $\Phi\geq 0$ and $\Phi(x,y)=0$ if and only if $x=y$. Then
\begin{equation}\label{TCFORMULA}
\TC(M)+1\leq \sum_{i=1}^p \left(\TC\nolimits_M(\Sigma_i)+1\right),
\end{equation}
where $\Sigma_1,\dots,\Sigma_p$ are the critical levels of $\Phi$.
\end{theorem}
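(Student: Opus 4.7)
The plan is to adapt the gradient flow argument of Theorem \ref{LS-CRIT} to the path fibration $\pi\colon M^I\to M\times M$, $\gamma\mapsto(\gamma(0),\gamma(1))$, whose minimal sectional number is $\TC(M)+1$.

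First, on each critical level $\Sigma_i$, put $k_i=\TC_M(\Sigma_i)$ and apply Definition \ref{TOPCOMPLEX} to obtain an open cover $\Sigma_i=U^i_0\cup\cdots\cup U^i_{k_i}$ on which the two projections to $M$ are homotopic; equivalently, there are continuous local sections $s^i_j\colon U^i_j\to M^I$ of $\pi$ assigning to each $(a,b)\in U^i_j$ a path in $M$ from $a$ to $b$.

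Next, I would extend each $U^i_j$ to all of $M\times M$ via the negative gradient flow $\varphi_t$ of $\Phi$. By Morse--Bott theory, $M\times M$ decomposes as the disjoint union of the stable submanifolds $S(\Sigma_i)$, and each limit map $\pi_i\colon S(\Sigma_i)\to\Sigma_i$ is a fibre bundle with contractible (Euclidean) fibres. Setting $V^i_j=\pi_i^{-1}(U^i_j)\subset M\times M$, these are locally closed submanifolds, hence ENRs; there are exactly $\sum_{i=1}^p(k_i+1)$ of them, and together they cover $M\times M$ (here the hypothesis $\Phi\geq 0$ with zero locus the diagonal is used to ensure that $\Delta_M$ is itself a critical submanifold, so the flow is genuinely adapted to the motion planning problem).

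On each $V^i_j$, I define a motion planning section as follows. Given $(x,y)\in V^i_j$, let $(a,b)=\lim_{t\to\infty}\varphi_t(x,y)\in U^i_j$ and concatenate three paths in $M$: the projection $t\mapsto p_1(\varphi_t(x,y))$ from $x$ to $a$, the prescribed path $s^i_j(a,b)$ from $a$ to $b$, and the time-reversed projection of $t\mapsto p_2(\varphi_t(x,y))$ from $b$ back to $y$. Continuity on $V^i_j$ follows from continuity of the flow together with the bundle structure of $\pi_i$. Invoking the ENR version of $\TC$ outlined in Section \ref{CUATRO} then yields $\TC(M)+1\leq\sum_{i=1}^p(k_i+1)$.

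The hardest step will be justifying the passage from open covers (as in Definition \ref{TOPCOMPLEX}) to ENR covers: the stable submanifolds $S(\Sigma_i)$ are generally not open in $M\times M$ except for the stratum of maximal index, so the $V^i_j$ are not open either. Showing that $\TC$ can equivalently be computed with ENR covers on sufficiently well-behaved spaces such as compact manifolds is the technical core of the argument, and is exactly what the reformulation of subspace topological complexity developed in Section \ref{CUATRO} is designed to accomplish.
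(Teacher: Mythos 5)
Your proposal is correct and follows essentially the same route the paper takes in proving its more general Theorem \ref{MORSEBOTT}, of which this statement is the special case $f=p_1$, $g=p_2$ on $M\times M$: cover $M\times M$ by the basins of attraction $S(\Sigma_i)$, note that each basin is an ENR homotopy equivalent to its critical level $\Sigma_i$ (so the subspace distance is unchanged by Proposition \ref{prop:homot_invariance}), enlarge each ENR to a genuinely open set with the same subspace distance via Proposition \ref{ENRS}, and conclude by subadditivity (Proposition \ref{lema:subadditivity}). The explicit concatenated section you build from the gradient trajectories is not needed for the abstract bound and corresponds instead to the paper's navigation-function discussion in Section \ref{NAVIG}; as written it also glosses over the reparametrization issue (the flow reaches $\Sigma_i$ only as $t\to\infty$), which the paper handles there via the arc-length substitution $s=\int_0^t\vert(\grad\Phi)_{x(t)}\vert\,dt$.
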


 Farber also proves that $\TC_M(\Sigma_i)$ equals the maximum of the subspace topological complexities of the connected components.

A function like that of Theorem \ref{TC-CRIT} is called  by Farber a {\em navigation function}  (cf. Section \ref{NAVIG}).

\section{Homotopic distance}\label{TRES}
The following notion was introduced by the authors in \cite{MM2020}.

\begin{definition}\label{PRIMERA}Let $f,g\colon X \to Y$ be two continuous maps. The {\em homotopic distance} $\Di(f,g)$ between $f$ and $g$  is the least integer $n\geq 0$ such that there exists an open covering $U_0\cup\cdots \cup U_n=X$  with the property that the restrictions  $f_{\vert U_j}$ and $g_{\vert U_j}$ are homotopic maps, for all $j=0,\dots,n$.  

If there is no such covering, we define $\Di(f,g)=\infty$.
\end{definition}

\begin{example} 
Let   $X$ be a path-connected topological space. The   L-S-category $\cat X$   of  $X$ equals the homotopic distance between the identity $\id_X$   and any constant map,   $\cat(X)=\Di(\id_X,x_0)$.
\end{example}

\begin{proposition}\cite[Proposition 2.5]{MM2020}
Given a base point $x_0\in X$ we define the axis inclusion maps $i_1,i_2\colon X\to X\times X$ as $i_1(x)=(x,x_0)$ and $i_2(x)=(x_0,x)$.  
The homotopic distance between $i_1$ and $i_2$ equals the LS-category of $X$, that is, $\Di(i_1,i_2)=\cat(X)$.
\end{proposition}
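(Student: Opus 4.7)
The plan is to prove the two inequalities $\Di(i_1,i_2)\leq \cat(X)$ and $\cat(X)\leq \Di(i_1,i_2)$ separately, both by direct constructions with homotopies. Throughout I will use the characterization $\cat(X)=\Di(\id_X,c_{x_0})$ stated earlier (Example), so that $\cat(X)\leq n$ exactly means there is an open cover $X=U_0\cup\cdots\cup U_n$ with each inclusion $U_j\hookrightarrow X$ null-homotopic to the constant map at $x_0$.

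For the inequality $\Di(i_1,i_2)\leq \cat(X)$, I would start with an open cover $X=U_0\cup\cdots\cup U_n$ realizing $\cat(X)=n$, together with homotopies $H_j\colon U_j\times I\to X$ satisfying $H_j(x,0)=x$ and $H_j(x,1)=x_0$. The key construction is to exploit $H_j$ in both the forward and reverse time directions simultaneously by defining
\[
F_j\colon U_j\times I\to X\times X,\qquad F_j(x,t)=\bigl(H_j(x,t),H_j(x,1-t)\bigr).
\]
A direct check shows $F_j(x,0)=(x,x_0)=i_1(x)$ and $F_j(x,1)=(x_0,x)=i_2(x)$, so $i_1|_{U_j}\simeq i_2|_{U_j}$ on the same cover, which gives the bound.

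For the reverse inequality $\cat(X)\leq \Di(i_1,i_2)$, I would begin with an open cover $X=U_0\cup\cdots\cup U_n$ realizing $\Di(i_1,i_2)=n$, together with homotopies $F_j\colon U_j\times I\to X\times X$ from $i_1|_{U_j}$ to $i_2|_{U_j}$. Writing $F_j=(a_j,b_j)$ in components, the endpoint conditions force $a_j(x,0)=x$ and $a_j(x,1)=x_0$, so $a_j$ is itself a homotopy in $X$ from the inclusion $U_j\hookrightarrow X$ to the constant map $c_{x_0}$. Hence each $U_j$ is contractible in $X$, and the same cover witnesses $\cat(X)\leq n$.

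I do not expect any serious obstacle here: both directions are elementary, and the only mildly clever ingredient is the symmetric ``forward/reverse'' recipe $(H_j(x,t),H_j(x,1-t))$ used to interpolate between the two axis inclusions. In the opposite direction, nothing more than projecting the homotopy onto its first component is needed, since the endpoint data of $F_j$ already encode a null-homotopy of the inclusion.
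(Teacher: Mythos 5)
Your proof is correct. The paper itself does not reprove this statement but simply cites \cite[Proposition 2.5]{MM2020}, so there is no in-paper proof to compare against; nonetheless your two inequalities are exactly the standard argument. In the forward direction the symmetric homotopy $F_j(x,t)=\bigl(H_j(x,t),H_j(x,1-t)\bigr)$ does the job cleanly, and in the reverse direction projecting $F_j$ onto its first coordinate extracts the required contraction. One small observation: your second half is really a special case of the general composition inequality $\Di(h\circ f,h\circ g)\leq\Di(f,g)$ (which \cite{MM2020} records as a basic property and which this paper invokes in Proposition \ref{prop:homot_invariance}); applying it with $h=p_1\colon X\times X\to X$ gives immediately $\cat(X)=\Di(\id_X,c_{x_0})=\Di(p_1\circ i_1,p_1\circ i_2)\leq\Di(i_1,i_2)$, which is precisely your ``project onto the first component'' step stated abstractly. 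So your argument matches the expected one in substance, and the only stylistic difference is that you unwind the functoriality of $\Di$ by hand rather than quoting it. You also implicitly use path-connectedness of $X$ when identifying ``contractible in $X$'' with ``contractible to $x_0$'' — this is a standing hypothesis in the paper, so it is fine, but it is worth flagging that your characterization of $\cat(X)\leq n$ relies on it.
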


\begin{example}
More generally, the L-S category of a map $f\colon X \to Y$ (\cite[Exercise 1.16, p.~43]{CLOT2003}) is the distance between $f$  and any constant map, $\cat f =\Di(f,x_0)$, when $Y$ is path-connected. For instance, the category of the diagonal $\Delta_X\colon X \to X \times X$ equals $\cat X$. \end{example}

Let $\Pa X=X^I$ be the path space of $X$ and let $\pi\colon \Pa X \to X\times X$, $\pi(\gamma)=\big(\gamma(0),\gamma(1)\big)$, be the path fibration sending each continuous path $\gamma\colon [0,1]\to X$  to its initial and final points.

\begin{proposition}\cite[Proposition 2.6]{MM2020}
  The  topological complexity $\TC(X)$ of the topological space $X$ equals the homotopic distance between the two projections $p_1,p_2\colon X\times X \to X$, that is,
	$\TC(X)=\Di(p_1,p_2)$.
\end{proposition}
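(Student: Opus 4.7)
The plan is to observe that the two definitions agree essentially on the nose. Unpacking Definition \ref{PRIMERA} for the pair $p_1,p_2\colon X\times X\to X$, the quantity $\Di(p_1,p_2)$ is the least $n\geq 0$ such that $X\times X$ admits an open cover $U_0\cup\cdots\cup U_n$ with $p_1|_{U_j}\simeq p_2|_{U_j}$ for every $j$. Unpacking Definition \ref{TOPCOMPLEX} with $A=X\times X$, the quantity $\TC(X)=\TC_X(X\times X)$ is the least $k\geq 0$ such that $X\times X$ admits an open cover $U_0\cup\cdots\cup U_k$ on each piece of which the first and second projections to $X$ are homotopic. These are the very same condition on open covers, so the two invariants are equal by matching covers and witnessing homotopies in both directions.

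For readers more familiar with Farber's original formulation, in which $\TC(X)$ is the least $k$ such that $X\times X$ admits an open cover $\{U_j\}_{j=0}^k$ over each of which the path fibration $\pi\colon \Pa X\to X\times X$, $\pi(\gamma)=(\gamma(0),\gamma(1))$, admits a continuous section, the bridge between the two viewpoints is the standard exponential correspondence. Namely, a continuous section $s\colon U\to\Pa X$ of $\pi$ over an open $U\subset X\times X$ is the same datum as a continuous homotopy $H\colon U\times[0,1]\to X$ with $H(\cdot,0)=p_1|_U$ and $H(\cdot,1)=p_2|_U$, via the formula $H((x,y),t)=s(x,y)(t)$. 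The section condition $\pi\circ s=\id_U$ is equivalent to the boundary conditions on $H$, so the equivalence is immediate.

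Consequently no step presents a real obstacle: the proposition is essentially a matter of unwinding definitions, and the only substantive remark is the equivalence between the \emph{homotopy of projections} formulation used in Definition \ref{TOPCOMPLEX} and the \emph{section of the path fibration} formulation originally used by Farber, which is purely formal.
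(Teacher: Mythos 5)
Your proof is correct, and it takes the same route the paper implicitly intends: this paper states Definition \ref{TOPCOMPLEX} already in the ``homotopic projections'' form, so once you set $A = X\times X$ and note that the two projections $U_j \rightrightarrows X$ are exactly the restrictions $p_1|_{U_j}$ and $p_2|_{U_j}$, the equality $\TC(X) = \Di(p_1,p_2)$ is a literal coincidence of definitions. Your second paragraph, translating to Farber's original section-of-the-path-fibration formulation via the exponential correspondence, is the only nontrivial remark one would need when working from that earlier definition; it is standard and correctly stated, and in this paper it is in fact a special case of Proposition \ref{prop:genus_distance}.
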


Other examples will be  given later (see Section \ref{MOTPLAN}).

\medskip{}

We propose the following definition of  {\em subspace distance}, that is, homotopic distance between two maps with respect to a subspace, as a generalization of Definitions \ref{LSCATEG} and \ref{TOPCOMPLEX}.

\begin{definition}\label{SUBDIST}Let $f,g\colon X \to Y$ be two continuous maps, and let $A\subset X$ be a subspace. The {\em subspace distance} between the two maps $f,g$ on $A$, denoted by $\Di_X(A;f,g)$, is defined as the distance  between the restrictions of $f,g$ to $A$, that is,
$$\Di_X(A;f,g):=\Di(f_{\vert A}, g_{\vert A}).$$
\end{definition}

Obviously, when $A=X$ we recover the usual homotopic distance. Moreover, observe that $\Di_X(A;f,g)=\Di(f\circ i_A,g\circ i_A)$, where $i_A\colon A \subset X$ is the inclusion.

\begin{example}If $i_A\colon A\subset X$ is a subspace, then 
$$\cat\nolimits_X A=\Di_X(A;\id_X,x_0)=\Di(i_A,x_0).$$
\end{example}

\begin{example}
If $i_A\colon A\subset X\times X$ is a subspace, and $p_1,p_2\colon X \times X \to X$ are the projections, then
 $$\TC\nolimits_X(A)=\Di_{X\times X}(A; p_1,p_2).$$
\end{example}

The following result will be used later.
 \begin{proposition}
Let $f,g\colon X \to Y$ be two continuous maps, and let $\{A_i\}_{i=1}^n$ be the connected components of $X$. Then 
$$ \Di(f,g)=\Di_X(X;f,g)=\max_i\Di_X(A_i;f,g).$$
\end{proposition}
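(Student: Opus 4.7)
The plan is to verify two equalities. The first, $\Di(f,g)=\Di_X(X;f,g)$, is immediate from Definition \ref{SUBDIST} since the restriction of $f$ and $g$ to $A=X$ is $f$ and $g$ themselves. The real content is the equality
$$\Di_X(X;f,g)=\max_i \Di_X(A_i;f,g).$$

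First I would establish the inequality $\Di_X(A_i;f,g)\leq \Di(f,g)$ for each $i$, which is straightforward: if $U_0\cup\cdots\cup U_n=X$ is an open cover with $f|_{U_j}\simeq g|_{U_j}$ for each $j$, then $\{U_j\cap A_i\}_{j=0}^n$ is an open cover of $A_i$ on which the restrictions of $f$ and $g$ remain homotopic. Taking the maximum over $i$ yields one direction.

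For the reverse inequality, I would use the standing hypothesis (recalled at the end of Section~1) that spaces are assumed to be path-connected unless otherwise stated, together with the fact that when there are finitely many components $A_1,\dots,A_n$, each $A_i$ is open in $X$ (this is automatic when components are open, in particular for manifolds and CW complexes). Set $N=\max_i \Di_X(A_i;f,g)$. For each component $A_i$, choose an open cover $U_0^i\cup\cdots\cup U_N^i=A_i$ (padding with empty sets if necessary) such that $f|_{U_j^i}\simeq g|_{U_j^i}$ via a homotopy $H_j^i\colon U_j^i\times I\to Y$. Because $A_i$ is open in $X$, each $U_j^i$ is open in $X$ as well. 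Now define
$$U_j=\bigsqcup_{i=1}^n U_j^i\subset X,$$
which is open in $X$ as a union of open sets, and $U_0\cup\cdots\cup U_N=X$. Since the $A_i$'s are pairwise disjoint and open, the homotopies $H_j^i$ paste together to a continuous homotopy $H_j\colon U_j\times I\to Y$ between $f|_{U_j}$ and $g|_{U_j}$. Therefore $\Di(f,g)\leq N$.

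The main obstacle, and the reason the statement is non-trivial, is the pasting step: one must know that the connected components are open so that the disjoint union of homotopies on them gives a continuous map on the union. Under the hypotheses implicit in the paper (manifolds, ENRs, or more generally spaces with only finitely many components) this is automatic; in a fully general topological setting one would need to impose this as an assumption, since otherwise even the restriction maps $f|_{A_i}$ need not carry enough information to reconstruct global open covers.
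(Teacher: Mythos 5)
Your proof is correct and follows essentially the same approach as the paper: the paper treats the case $n=2$ for simplicity, defining the $j$-th open set of the combined cover as $U_j \cup V_j$ (or just $V_j$ once the smaller cover is exhausted), which is the same padding-and-pasting idea you use in general. You are somewhat more explicit than the paper about the point that is actually doing the work — namely that a finite set of connected components is automatically a partition into open (and closed) subsets, which is what makes the pasted cover open and the glued homotopies continuous.
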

\begin{proof}
For the simplicity of the notation, we will do the proof for the case of two connected components $A_1$ and $A_2$. Say $\Di_X(A_1;f,g)=n_1\leq \Di_X(A_2;f,g)=n_2$ with open coverings $\{U_i\}_{i=0}^{n_1}$ and $\{V_i\}_{i=0}^{n_2}$ of $A_1$ and $A_2$, respectively. Then $\{U_i\cup V_i\}_{i=0}^{n_1} \cup \{V_i\}_{i=n_1+1}^{n_2}$ is an open cover of $X$. Notice that $U_i\cap V_i=\emptyset$ for $i=0,\dots,n_1$, thus guarantying that $f$ and $g$ are homotopic on $U_i\cup V_i$. Hence,  $\Di_X(X;f,g)\leq n_2$.

The other inequality, $n_2=\Di(A_2;f,g)\leq \Di_X(X;f,g)$ simply follows from $A_2\subset X$.
\end{proof}
The main property of the homotopic distance, and in consequence of $\cat$ and $\TC$, is its homotopy invariance (\cite[Proposition 3.13]{MM2020}). We shall need the following relative version.

\begin{proposition}\label{prop:homot_invariance}Let $f,g \colon X\to Y$  be two continuous maps. Let $i_A\colon A \hookrightarrow X$ and $i_B\colon B \hookrightarrow X$ be two subspaces, and let $\alpha \colon A \to B$ be a homotopy equivalence, such that $i_B\circ\alpha \simeq i_A$:
$$\xymatrix{
A\ar@<+0.0ex>[d]_{\ \alpha\ }\ar@<+0.0ex>[r]^{\ i_A\ }&X\ar@<+0.5ex>[r]^{\ f\ }\ar@<-0.5ex>[r]_{\ g\
} &\ Y\\
B\ar@<+0.0ex>[ur]_{\ i_B\ }&
} $$
Then
$\Di_X(A;f ,g )=\Di_X(B;f,g)$.
\end{proposition}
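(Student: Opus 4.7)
The plan is to unfold the definition of the subspace distance and reduce the claim to two invariance properties of the homotopic distance. By Definition \ref{SUBDIST}, the statement $\Di_X(A;f,g)=\Di_X(B;f,g)$ translates into
\[
\Di(f\circ i_A,\,g\circ i_A)=\Di(f\circ i_B,\,g\circ i_B).
\]

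First, I would post-compose the given homotopy $i_B\circ\alpha\simeq i_A$ with $f$ and with $g$ to obtain $f\circ i_A\simeq f\circ i_B\circ\alpha$ and $g\circ i_A\simeq g\circ i_B\circ\alpha$. The invariance of $\Di$ under homotopies of its two arguments (if $h\simeq h'$ and $k\simeq k'$ then $\Di(h,k)=\Di(h',k')$, which is immediate from Definition \ref{PRIMERA}, since any open cover witnessing one equality also witnesses the other) then reduces the problem to proving
\[
\Di(f\circ i_B\circ\alpha,\,g\circ i_B\circ\alpha)=\Di(f\circ i_B,\,g\circ i_B).
\]

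The main step, and the obstacle, is this last equality: invariance of the homotopic distance under precomposition by the homotopy equivalence $\alpha$. Writing $F=f\circ i_B$ and $G=g\circ i_B$, the inequality $\Di(F\circ\alpha,G\circ\alpha)\leq\Di(F,G)$ is routine: any open cover $\{V_j\}$ of $B$ witnessing $\Di(F,G)$ pulls back via $\alpha$ to an open cover $\{\alpha^{-1}(V_j)\}$ of $A$ on which $F\circ\alpha\simeq G\circ\alpha$. For the reverse inequality, I would pick a homotopy inverse $\beta\colon B\to A$ of $\alpha$ and, given a cover $\{U_j\}$ of $A$ realizing $\Di(F\circ\alpha,G\circ\alpha)$, set $W_j=\beta^{-1}(U_j)$. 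These are open and cover $B$ (since the $U_j$ cover $A$). Using the homotopy $\alpha\circ\beta\simeq\id_B$, on each $W_j$ one has
\[
F|_{W_j}\simeq (F\circ\alpha)\circ(\beta|_{W_j})\simeq (G\circ\alpha)\circ(\beta|_{W_j})\simeq G|_{W_j},
\]
where the middle homotopy is the chosen one on $U_j$ composed with $\beta|_{W_j}\colon W_j\to U_j$. Transitivity yields $F|_{W_j}\simeq G|_{W_j}$, as required.

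In effect, the proof will just repackage the homotopy invariance of $\Di$ (Proposition 3.13 of \cite{MM2020}): the genuinely new ingredient is only the observation that $i_B\circ\alpha\simeq i_A$ lets us replace $i_A$ by $i_B\circ\alpha$ in both arguments at once, after which the homotopy-equivalence invariance takes over.
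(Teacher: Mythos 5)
Your proof is correct and follows the same route as the paper's: both reduce the claim to (i) homotopy invariance of $\Di$ in its two arguments, applied to $i_B\circ\alpha\simeq i_A$, and (ii) invariance of $\Di$ under precomposition with a homotopy equivalence. The only difference is that the paper simply cites Propositions 2.2 and 3.12 of \cite{MM2020} for these two facts, whereas you spell out the cover-pullback argument for the precomposition invariance in full; your version is therefore a correct, more self-contained rendering of the same argument.
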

\begin{proof} 
Since $\alpha$ is a homotopy equivalence 
and $i_B\circ \alpha \simeq i_A$ (cf. \cite[Propositions 2.2 and 3.12]{MM2020}) we have
$$\begin{array}{ll}
\Di_X(B; f,g)&=\Di(f_{\vert B},g_{\vert B})=\Di(f\circ i_B,g\circ i_B)=\Di(f\circ i_B\circ\alpha,g\circ i_B\circ\alpha)\\
&= \Di(f\circ i_A ,g\circ i_A)=\Di(f_{\vert A},g_{\vert A})=\Di_X(A; f, g).\quad
\end{array}$$
\end{proof}
Finally, it is easy to prove the following sub-additivity property:
\begin{proposition}\label{lema:subadditivity}
		Given  maps $f,g\colon X\to Y$ and a finite open  covering $V_1\cup \cdots \cup V_p=X$, 
		it happens that
		$$\Di(f,g)+1\leq \sum_{i=1}^p\left(\Di_X(V_i;f ,g )+ 1\right).$$
	\end{proposition}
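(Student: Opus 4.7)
The plan is essentially to glue together open covers witnessing the subspace distance on each $V_i$ to build an open cover of $X$ that witnesses $\Di(f,g)$. Since the $V_i$ are open in $X$, any open subset of $V_i$ is automatically open in $X$, and this is what makes the construction work without having to enlarge or refine anything.

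Concretely, for each $i = 1, \dots, p$, set $n_i := \Di_X(V_i; f, g)$. By the definition of subspace distance (Definition \ref{SUBDIST}), there exists an open covering $V_i = U_0^i \cup \cdots \cup U_{n_i}^i$ of $V_i$, where each $U_j^i$ is open in $V_i$, such that the restrictions $f|_{U_j^i}$ and $g|_{U_j^i}$ are homotopic. Because $V_i$ is open in $X$, each $U_j^i$ is also open in $X$.

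Next, consider the collection $\{U_j^i : 1 \leq i \leq p,\ 0 \leq j \leq n_i\}$. Its union is $\bigcup_i V_i = X$, so this is an open covering of $X$ of cardinality $\sum_{i=1}^p (n_i + 1)$, and on each of its members $f$ and $g$ are homotopic. By Definition \ref{PRIMERA} applied to this covering, $\Di(f,g) + 1$ is bounded above by the number of open sets in the cover, that is,
$$\Di(f,g) + 1 \ \leq\ \sum_{i=1}^p (n_i + 1) \ =\ \sum_{i=1}^p \left(\Di_X(V_i; f, g) + 1\right),$$
which is the desired inequality.

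There is no real obstacle here — the argument is routine once one observes that openness in $V_i$ transfers to openness in $X$ when $V_i$ itself is open. A minor point worth mentioning is the convention when some $\Di_X(V_i;f,g)=\infty$: in that case the right-hand side is $\infty$ and the inequality holds vacuously, so we may assume all $n_i$ are finite.
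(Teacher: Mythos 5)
Your proof is correct and is exactly the routine gluing argument the authors have in mind (the paper states this proposition with the remark ``it is easy to prove'' and omits the details). The one point that makes the argument go through — that open subsets of an open $V_i$ are open in $X$, so the witnessing covers can simply be combined — is correctly identified, and the remark about the infinite case is a sensible housekeeping note.
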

	
	\section{Homotopic distance and Morse-Bott functions}\label{CUATRO}
	
	In order to generalize Theorems \ref{LS-CRIT} and \ref{TC-CRIT}, we need to tackle Definition \ref{PRIMERA} in a situation that does not  demand that the pieces in which we decompose the space are open subsets. In fact, for a Morse-Bott function, the pieces will be the basins of the negative gradient flow, which are submanifolds, but neither open nor closed subspaces, in general.

Hence, we shall restrict ourselves to smooth manifolds and submanifolds, or more generally, to topological spaces which can be enlarged to an open neighbourhood.
These are the so-called {\em Euclidean neighbourhood retracts} (ENR, for short).

\begin{definition}\cite[p. 448]{Di2008}, \cite[p. 81]{Do1972} A topological space $E$ is called a {\em Euclidean neighbourhood retract} (ENR for short) if it is homeomorphic to a subspace $E'\subset \R^n$ which is a retract of some neighbourhoud $E'\subset W \subset \R^n$.
\end{definition}

The class of ENRs includes all finite-dimensional cell
complexes and all compact topological manifolds (\cite[Appendix E]{Br1993}).

We need the following property.

\begin{proposition}\label{ENRS}\cite[Cor. 8.7]{Do1972}, \cite[Remark 18.4.4]{Di2008}
Let $A\subset X$ be two ENRs. Then  
there exists an open neighborhood $A\subset U\subset X$ of $A$ in $X$  and a retraction $r \colon U \to A$
such that the inclusion $i_U\colon U \hookrightarrow X$ is homotopic to $i_A\circ r$, where $i_A\colon A \hookrightarrow X$ denotes the inclusion. That is, we have a diagram, commutative up to homotopy,
$$\xymatrix{
 &\ U\  \ar[d]^r\ \ar@<+0.5ex>[dr]^{i_U\
} &\\
&A\ \ar@<-0.0ex>[r]_{\ i_A\
}&X} 
$$
\end{proposition}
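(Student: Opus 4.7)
The plan is to exploit both ENR structures simultaneously: use the Euclidean retraction of $X$ to pull the straight-line deformation back into $X$, and use the Euclidean retraction of $A$ to produce the retraction $r$.

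First I would fix a homeomorphic embedding of $X$ as a subspace of some $\R^n$. Since $X$ is an ENR there is an open neighborhood $W\subset \R^n$ of $X$ and a retraction $\rho\colon W\to X$. Since $A$ is an ENR and sits inside $\R^n$, there is also an open neighborhood $V\subset \R^n$ of $A$ and a retraction $s\colon V\to A$. Replacing $V$ by $V\cap W$ (open in $\R^n$) I may assume $V\subset W$.

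Next I would define the candidate neighborhood and the candidate retraction. Let me set $U_0=X\cap V$, which is open in $X$ and contains $A$, and put $r=s|_{U_0}\colon U_0\to A$; clearly $r$ restricts to the identity on $A$. It remains to produce the homotopy from $i_{U_0}\colon U_0\hookrightarrow X$ to $i_A\circ r$ inside $X$, possibly after shrinking $U_0$ to a smaller open neighborhood $U\subset U_0$ of $A$. The natural candidate is
\[
H(x,t)=\rho\bigl((1-t)\,x+t\,s(x)\bigr),\qquad x\in U,\ t\in[0,1].
\]
At $t=0$ this gives $\rho(x)=x$ since $x\in X$; at $t=1$ it gives $\rho(s(x))=s(x)=r(x)=i_A(r(x))$ since $s(x)\in A\subset X$; and for each fixed $t$ it is a continuous map into $X$.

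The only thing to check, and the main obstacle, is that the straight-line segment $(1-t)x+t\,s(x)$ lies inside the open set $W$ for every $t\in[0,1]$, so that $\rho$ can be applied. To achieve this I would define
\[
U=\bigl\{x\in U_0\ :\ (1-t)x+t\,s(x)\in W\text{ for all }t\in[0,1]\bigr\}.
\]
A standard argument, using that $W$ is open and that the segment from $a$ to $s(a)=a$ is constant for $a\in A$, shows by compactness of $[0,1]$ and continuity of $s$ that $U$ is an open neighborhood of $A$ in $U_0$ (hence in $X$). On this $U$, the formula above defines a continuous homotopy in $X$ from $i_U$ to $i_A\circ r|_U$, which is exactly the conclusion of the proposition. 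The technical difficulty is solely in the openness of $U$; once that is verified, the rest is formal.
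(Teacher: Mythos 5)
The paper does not prove this proposition; it merely cites Dold (Cor.~8.7) and tom Dieck (Remark~18.4.4), so there is no in-paper argument to compare against. Your construction is, as far as I can tell, the standard one and it is correct in outline: pull the straight-line homotopy $(1-t)x + t\,s(x)$ back into $X$ with the Euclidean retraction $\rho$ of $X$, use the Euclidean retraction $s$ of $A$ to define $r$, and shrink to the tube $U$ where the segment stays in $W$; the tube-lemma argument for openness of $U$, and the checks $H(\cdot,0)=i_U$, $H(\cdot,1)=i_A\circ r$, are all fine.

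There is one step you present as immediate that actually hides a theorem. You write: ``Since $A$ is an ENR and sits inside $\R^n$, there is an open neighborhood $V\subset\R^n$ of $A$ and a retraction $s\colon V\to A$.'' The definition of ENR only gives you a retraction for \emph{one particular} embedding $A\cong A'\subset\R^m$; it does not, by itself, say that the embedding $A\subset X\subset\R^n$ you fixed (which a priori has nothing to do with the $\R^m$ in the definition) is a neighborhood retract of that $\R^n$. That every Euclidean embedding of an ENR is a Euclidean neighborhood retract is itself a result one must quote or prove (in Dold it is the neighborhood extension property, IV.8.5--8.6: since $A$ is locally compact it is closed in some open $Z\subset\R^n$, and the identity $\id_A\colon A\to A$ then extends over a neighborhood of $A$ in $Z$ because $A$ is an ENR). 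If you invoke that lemma explicitly, the rest of your argument closes the proof; without it, the key input is assumed rather than established.
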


\begin{corollary}\label{TUBE}Let $A\subset X$ be two ENRs, and let $f,g\colon X  \to Y$  be two continuous maps. If $f,g$ are homotopic on  $A$, then there is an open neighbourhood $A\subset U \subset X$ such that
$f,g$ are homotopic on $U$.
\end{corollary}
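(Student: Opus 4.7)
The plan is to derive the corollary as a direct application of Proposition~\ref{ENRS}, using only the elementary fact that homotopies can be pre- and post-composed with continuous maps.

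First I would invoke Proposition~\ref{ENRS} on the pair $A\subset X$ of ENRs to obtain an open neighbourhood $A\subset U\subset X$ together with a retraction $r\colon U\to A$ such that the inclusion $i_U\colon U\hookrightarrow X$ is homotopic (as a map into $X$) to $i_A\circ r$. This gives us three homotopies we can chain together: by pre-composing the homotopy $i_U\simeq i_A\circ r$ with $f$ and with $g$ we obtain
\[
 f\circ i_U\simeq f\circ i_A\circ r,\qquad g\circ i_A\circ r\simeq g\circ i_U,
\]
and by post-composing the assumed homotopy $f\circ i_A\simeq g\circ i_A$ with the retraction $r\colon U\to A$ we obtain
\[
 f\circ i_A\circ r\simeq g\circ i_A\circ r.
\]
Concatenating these three homotopies gives $f|_U=f\circ i_U\simeq g\circ i_U=g|_U$, which is the desired conclusion.

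The only step that even requires a moment's thought is verifying that Proposition~\ref{ENRS} applies in the stated form; once the retraction~$r$ and the homotopy $i_U\simeq i_A\circ r$ are in hand, the argument is purely formal. Note that, alternatively, one could phrase the same reasoning in the language of Proposition~\ref{prop:homot_invariance}: the retraction $r$ plays the role of the map $\alpha$ (with the subspaces $U$ and $A$ in place of $A$ and $B$), since the relation $i_A\circ r\simeq i_U$ in $X$ is exactly the compatibility condition required there. The statement $\Di_X(U;f,g)=\Di_X(A;f,g)$ would then yield in particular that $\Di_X(U;f,g)=0$ whenever $\Di_X(A;f,g)=0$, which is the content of the corollary. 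I expect no serious obstacle: this is a short formal consequence of the neighbourhood retraction property of ENRs, and its role in the paper is to allow later arguments (for Theorem~\ref{MORSEBOTT}) to replace an ENR $A$ by a small open neighbourhood on which the two maps remain homotopic.
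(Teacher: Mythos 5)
Your proof is correct and follows the same route as the paper: invoke Proposition~\ref{ENRS} to obtain the neighbourhood $U$ and retraction $r$ with $i_U\simeq i_A\circ r$, then chain $f\circ i_U\simeq f\circ i_A\circ r\simeq g\circ i_A\circ r\simeq g\circ i_U$. The alternative phrasing via Proposition~\ref{prop:homot_invariance} that you mention is exactly the next corollary in the paper.
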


\begin{proof}Let    $r\colon U \to A$ as in Corollary \ref{ENRS}. We have
$$f_{\vert U}=f\circ i_U\simeq f\circ i_A \circ r =f_{\vert A}\circ r \simeq g_{\vert A}\circ r =g \circ i_A \circ r \simeq g\circ i_U=g_{\vert U}.$$
\end{proof}

\begin{corollary}$\Di_X(A;f,g)=\Di_X(U;f,g)$.
\end{corollary}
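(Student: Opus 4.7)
The plan is to prove both inequalities separately, with the nontrivial direction being an application of the sibling Proposition \ref{prop:homot_invariance}.

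For the easy direction $\Di_X(A;f,g)\le\Di_X(U;f,g)$, I would take any open cover $U_0\cup\cdots\cup U_n=U$ by subsets on which $f\simeq g$, and simply intersect with $A$. Each $U_j\cap A$ is open in $A$, they cover $A$ since $A\subset U$, and the homotopies restrict. So the number of pieces needed on $A$ is at most the number needed on $U$.

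For the reverse direction, the plan is to use Proposition \ref{ENRS} together with Proposition \ref{prop:homot_invariance}. The retraction $r\colon U\to A$ is a homotopy equivalence with homotopy inverse the inclusion $j\colon A\hookrightarrow U$: indeed $r\circ j=\id_A$ directly, while the relation $i_U\simeq i_A\circ r=i_U\circ(j\circ r)$ encodes (in the ENR setting from which Proposition \ref{ENRS} is taken) that $j\circ r\simeq \id_U$. Applying Proposition \ref{prop:homot_invariance} with the roles ``$A$'' $=A$, ``$B$'' $=U$, $\alpha=j$, $i_B=i_U$, $i_A=i_A$, and noting that $i_U\circ j=i_A$ holds on the nose, yields $\Di_X(A;f,g)=\Di_X(U;f,g)$.

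Alternatively, the same conclusion can be obtained by direct construction, which is essentially the proof idea of Corollary \ref{TUBE} repeated inside a cover: starting from an open cover $V_0\cup\cdots\cup V_n=A$ with $f_{\vert V_j}\simeq g_{\vert V_j}$, set $W_j=r^{-1}(V_j)\subset U$. These are open in $U$, they cover $U$ because $r$ is defined on all of $U$, and on each $W_j$ the chain
\[
f_{\vert W_j}=f\circ i_U\circ \iota_j\simeq f\circ i_A\circ r\circ\iota_j\simeq g\circ i_A\circ r\circ\iota_j\simeq g\circ i_U\circ\iota_j=g_{\vert W_j}
\]
(where $\iota_j\colon W_j\hookrightarrow U$) shows $f_{\vert W_j}\simeq g_{\vert W_j}$, giving $\Di_X(U;f,g)\le\Di_X(A;f,g)$.

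The only delicate point is verifying that the retraction $r$ from Proposition \ref{ENRS} really qualifies as a homotopy equivalence (so that Proposition \ref{prop:homot_invariance} applies cleanly); once this is settled, the corollary is immediate. Since the direct-covering argument in the previous paragraph avoids this subtlety, it is probably the safest presentation.
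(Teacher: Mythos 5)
Your proof is correct. In the direct-covering form it is exactly what lies behind the paper's one-line citation of Propositions \ref{prop:homot_invariance} and \ref{ENRS}, and it is the natural generalization of Corollary \ref{TUBE} (which is the $n=0$ case). Your worry about the first form is legitimate: Proposition \ref{prop:homot_invariance} hypothesizes that $\alpha$ is a homotopy equivalence, but Proposition \ref{ENRS} only supplies the homotopy $i_U\simeq i_A\circ r$ as maps into the ambient space $X$; this does not by itself yield a homotopy $j\circ r\simeq\id_U$ taking values in $U$, so neither $j\colon A\hookrightarrow U$ nor $r\colon U\to A$ is automatically a homotopy equivalence from the data as stated. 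Your direct argument sidesteps this entirely. An equivalent short route, using only the two facts already invoked in the proof of Proposition \ref{prop:homot_invariance} (homotopy invariance of $\Di$ and the inequality $\Di(f\circ h,g\circ h)\le\Di(f,g)$ for any map $h$), is: $\Di(f\circ i_U,g\circ i_U)=\Di(f\circ i_A\circ r,g\circ i_A\circ r)\le\Di(f\circ i_A,g\circ i_A)$, with the reverse inequality being your easy direction since $A\subset U$. Your instinct to favor the direct presentation is sound.
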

\begin{proof}
It is an inmediate consequence of Proposition \ref{prop:homot_invariance} and Proposition \ref{ENRS}.  
\end{proof}

 With the previous ingredients, we are ready to state the following result.

\begin{theorem}\label{MORSEBOTT}Let $\Phi\colon M \to \R$ be a Morse-Bott function in the compact smooth manifold $M$. Let $c_1<\dots< c_p$ be its critical values, and let $\Sigma_i=\Phi^{-1}(c_i)\cap \crit \Phi$ be the set of critical points in the level $ \Phi=c_i$.  If $f,g\colon M \to Y$ are two continuous maps, then
$$\Di(f,g)+1\leq \sum_{i=1}^p(\Di_M(\Sigma_i;f,g)+1).$$
\end{theorem}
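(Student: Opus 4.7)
The plan is to adapt Farber's proof of Theorem~\ref{TC-CRIT} to the homotopic distance setting, using the negative gradient flow of $\Phi$ together with the ENR-neighbourhood trick from Corollary~\ref{TUBE}.

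\textbf{Step 1 (covers on the critical sets).} For each $i=1,\dots,p$, set $n_i:=\Di_M(\Sigma_i;f,g)$. By Definition~\ref{SUBDIST} there exist open sets $G_0^i,\dots,G_{n_i}^i$ of $\Sigma_i$ covering it, such that $f\circ i_{G_j^i}\simeq g\circ i_{G_j^i}$ for every $j$, where $i_{G_j^i}\colon G_j^i\hookrightarrow M$ denotes the inclusion.

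\textbf{Step 2 (pull back through the basins).} For each critical submanifold $\Sigma_i$, the stable manifold $S(\Sigma_i)$ is a submanifold of $M$ and the limit map $\pi_i\colon S(\Sigma_i)\to \Sigma_i$ is a fibre bundle with contractible fibres, produced by following the gradient flow. In particular, $\pi_i$ is a homotopy equivalence and the inclusion $i_{S(\Sigma_i)}\colon S(\Sigma_i)\hookrightarrow M$ is homotopic to $i_{\Sigma_i}\circ\pi_i$ via the gradient flow (which never leaves $S(\Sigma_i)\subset M$). Define
$$V_j^i:=\pi_i^{-1}(G_j^i)\subset S(\Sigma_i)\subset M.$$
Each $V_j^i$ is open in $S(\Sigma_i)$, hence a smooth submanifold of $M$, and in particular an ENR. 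Moreover, since $M$ is the disjoint union of the basins $S(\Sigma_i)$, the collection $\{V_j^i\}_{i,j}$ covers $M$. The restriction $\pi_i\vert_{V_j^i}\colon V_j^i\to G_j^i$ is still a homotopy equivalence, and the gradient-flow deformation shows $i_{V_j^i}\simeq i_{G_j^i}\circ\pi_i\vert_{V_j^i}$ as maps into $M$. Applying Proposition~\ref{prop:homot_invariance} with $\alpha=\pi_i\vert_{V_j^i}$, we conclude
$$\Di_M(V_j^i;f,g)=\Di_M(G_j^i;f,g)=0,$$
so $f$ and $g$ are homotopic on each $V_j^i$.

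\textbf{Step 3 (open up by the ENR trick).} The pieces $V_j^i$ are in general not open in $M$ (they are submanifolds cutting across the level sets of $\Phi$), which is precisely the obstruction one has to negotiate. Since both $V_j^i$ and $M$ are ENRs and $f\vert_{V_j^i}\simeq g\vert_{V_j^i}$, Corollary~\ref{TUBE} provides an open neighbourhood $W_j^i\subset M$ of $V_j^i$ on which $f$ and $g$ remain homotopic. Because the $V_j^i$ already cover $M$, so do the enlarged $W_j^i$, giving an open cover of $M$ with $\sum_{i=1}^p(n_i+1)$ members on each of which $f\simeq g$. By Definition~\ref{PRIMERA},
$$\Di(f,g)+1\leq \sum_{i=1}^p(n_i+1)=\sum_{i=1}^p\bigl(\Di_M(\Sigma_i;f,g)+1\bigr).$$

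\textbf{Expected main obstacle.} The delicate point is Step 2: upgrading the local homotopy $f\simeq g$ from $G_j^i$ (inside the critical submanifold) to its preimage $V_j^i$ in the ambient manifold. This relies on the fact that the gradient flow provides not only a retraction $\pi_i$ but a deformation retraction of $V_j^i$ onto $G_j^i$ that takes place inside $M$, so that the homotopy invariance of $\Di$ (Proposition~\ref{prop:homot_invariance}) applies with $X=M$. Everything else is bookkeeping together with the ENR thickening argument of Corollary~\ref{TUBE}, which is exactly the mechanism needed to compensate for the basins not being open.
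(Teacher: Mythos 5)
Your proof is correct and follows essentially the same route as the paper's: decompose $M$ into the basins of attraction of the gradient flow, use the relative homotopy invariance of $\Di$ (Proposition~\ref{prop:homot_invariance}) to transport homotopies from the critical submanifolds, and apply the ENR thickening (Corollary~\ref{TUBE}) to obtain an open cover of $M$. The only cosmetic difference is bookkeeping: you unfold the cover of each $\Sigma_i$ into pieces $G_j^i$, pull each piece back to $V_j^i$, and thicken piece by piece before counting directly, whereas the paper thickens each whole basin $S(\Sigma_i)$ to $V_i$ and then invokes the subadditivity property (Proposition~\ref{lema:subadditivity}), which is proved by exactly the unfolding you carry out by hand.
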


\begin{proof}For each $\Sigma_i$, let $S(\Sigma_i)$ be the basin $\pi^{-1}(\Sigma_i)$ of  points in $M$ whose limit point $\varphi(p,\infty)\in \Sigma_i$. Since $\pi\colon S(\Sigma_i ) \to \Sigma_i$ is a homotopy equivalence, by Proposition \ref{prop:homot_invariance} we have
$$\Di_M(\Sigma_i;f,g)=\Di_M(S(\Sigma_i);f,g).$$
Now, each $S(\Sigma_i)$ is a submanifold of $M$, so by Proposition \ref{ENRS},
there is an open subset $\Sigma_i\subset V_i \subset M$ such that
$$\Di_M(S(\Sigma_i);f,g)=\Di_M(V_i;f,g).$$
Finally, since $\cup_i S(\Sigma_i)=M$, we have $\cup_i V_i=M$, and by Proposition \ref{lema:subadditivity} we have
$$\Di(f,g)\leq \sum_i \Di_M(V_i;f,g)=\sum_i \Di_M(\Sigma_i;f,g),$$
as stated.
\end{proof}

\section{Cut locus}\label{CINCO}
In Theorem \ref{MORSEBOTT} (and in Theorems \ref{LS-CRIT} and \ref{TC-CRIT}, which are particular cases), we gave an upper bound for the homotopic distance between two continuous maps $f,g$, defined on a manifold $M$, by considering their restrictions to the critical set of a differentiable Morse-Bott function $\Phi$ also defined on $M$.

In what follows, we shall consider a submanifold $N$ of a  complete Riemannian manifold $M$ and the function $\Phi\colon M\to \mathbb{R}$, $\Phi(y)=d(y,N)^2$ given by (the square of) the distance from the point $y\in M$ to the submanifold $N$. It is well known that this function is not differentiable in the {\em cut locus} of $N$ (\cite[Proposition 4.8]{Sakai}). 
Anyway, we shall try to adapt our preceding results to this setting.

\subsection{Preliminaries}
We begin by recalling some basic facts about the cut locus. See for instance \cite[Chapter VIII.7]{KN1967} or \cite[Chapter II.C]{GHL1990}. Good surveys are  Kobayashi's paper \cite{Ko1989} or the more recent Angulo's thesis \cite{An2014}.

Let $M$ be a complete ${\Class}^\infty$  Riemannian manifold and let  $N$ be an immersed submanifold. The {\em Riemannian distance} $d(y,N)$, from $y\in M$ to $N$, is the infimum  of the lengths of all
piecewise smooth curves joining $y$ to some point $x\in N$. 

A geodesic $\gamma$ between two points $x,y\in M$ is said to be minimizing if its length  equals  the distance $d(x,y)$. 
A unit speed geodesic $\gamma\colon [0,t_0]\to M$ emanating from  $N$  is an {\em $N$-segment} (or $N$-minimizing geodesic) if its length $t$ equals the distance $ d(\gamma(t),N)$,  for all $t\in [0,t_0]$ (\cite{IT2001}). In this case the geodesic must be ortohogonal to $N$. 

\begin{definition}
 The point $\gamma(t_0)$ is called a {\em cut point} of $N$ if there is no  $N$-segment properly containing $\gamma([0,t_0])$. The cut locus $\Cut N$   is the set of all these cut points.
 \end{definition}

The simplest case is when $N=\{x\}$ is a unique point. On each geodesic curve emanating from the point $x$,  the cut point is the last point to which the geodesic minimizes distance.

\begin{example} If $M=S^n$ is the $n$-dimensional unit sphere and $x$ is its
North Pole,  the cut locus of $x$ reduces
to the South Pole. If $M= \R P^n$ is the projective space, the
Riemannian metric of $S^n$ induces a Riemannian metric on  $M$, 
so that the projection of $S^n$ onto $M$ is a local
isometry. The cut locus of the point $x$ corresponding to the North and South Poles of the sphere $S^n$ is the
image of the equator of $S^n$ under the projection, that is a naturally imbedded $(n - 1)$-dimensional
projective space $\R P^{n-1}$. Analogously, in $\C P^n$ the cut locus of a point $x$ is isometric to $\C P^{n-1}$.
\end{example}

\begin{example}\label{EXTORUS}
If $M=T^2=S^1\times S^1$ is the torus seen as a quotient of the square $I\times I$, the usual Riemannian metric on $\R^2$ induces a Riemannian metric on $M$. The cut locus of the point $x=[(1/2,1/2)]$ can be identified with the image of the boundary of the square, that is,
\begin{equation}\label{CUTPROD}
\Cut (x) =(C \times S^1) \cup (S^1 \times C)=S^1\vee S^1,
\end{equation}
where $C=\{[0]\}$ is the cut point of $[1/2]$ in the circle $S^1$ seen as a quotient of $I$.

\end{example}

\subsection{Structure of the cut locus}

When $N$ is an arbitrary submanifold, we denote by $\pi\colon (\U\nu) N\to N$ the unit normal bundle to $N$. Let  $\gamma_u(t)$ be the unit speed geodesic emanating from $x\in N$ in the direction of $u\in (\U\nu)_x N$. The transverse exponential map $\Exp_x(tu)=\gamma_u(t)$ is a diffeomorphism from a tubular neighborhood
of the zero section of the normal bundle $\nu N$ of $N$ into a tubular neighborhood
of $N$ in $M$, but singularities can appear for large vectors. 
For a vector $w=tu\in \nu_x N$ 
where $\Exp_x$ is not regular, the {\em order of conjugacy} of $w$ is the dimension $k>0$ of the kernel of the linear
map $D_w\Exp_x$. The point $y=\Exp_x(w)$ is called a {\em conjugate point} of $x\in N$.

It is standard that the points of $\Cut N$ are either the first conjugate  point $y\in M$ on a length minimizing geodesic starting at $N$, or a ``separation point'', that is, a point $y\in M$ where there are at least two length minimizing geodesics from $N$ to $y$. 
On a simply connected complete symmetric space, the cut locus of a point coincides with the  first conjugate locus  (\cite[Theorem 5]{Cr1962}).

\begin{proposition}\cite[Theorem 3.26]{BP2020}\label{CLOSED}
If the manifold $M$ is complete, the cut locus $\Cut N$ of a compact submanifold $N$ is a closed subset of $M$; in fact, it is the closure of the separation points. 
 \end{proposition}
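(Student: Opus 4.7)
The plan is to prove the two assertions separately: first that $\Cut N$ is sequentially closed, and second that every cut point is a limit of separation points (so that $\Cut N$ equals the closure of the set of separation points).

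For closedness, I would take $y_n\in\Cut N$ with $y_n\to y$ and, for each $n$, pick a foot point $x_n\in N$ realizing $d(y_n,N)$ together with an $N$-segment $\gamma_n\colon[0,t_n]\to M$, $t_n=d(y_n,N)$, joining $x_n$ to $y_n$. Since $N$ is compact, the sequence $\{x_n\}$ admits a convergent subsequence $x_n\to x\in N$, and the unit initial directions $\dot\gamma_n(0)\in(\U\nu)_{x_n}N$ lie in the restriction of the unit normal bundle over a compact neighbourhood of $x$, which is itself compact; so along a further subsequence $\dot\gamma_n(0)\to u\in(\U\nu)_x N$. Continuity of the geodesic flow and of $d(\cdot,N)$ then yields a limit $N$-segment $\gamma\colon[0,t]\to M$, $t=d(y,N)$, $\gamma(0)=x$, $\gamma(t)=y$.

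It remains to check $y\in\Cut N$, for which I would split into two subcases according to how each $y_n$ fails to be an interior point of an $N$-segment. Case (a): infinitely many $y_n$ are separation points, admitting a second minimizing $N$-segment $\gamma_n'$; the same compactness argument produces a second limit $N$-segment $\gamma'$ to $y$, and either $\gamma\ne\gamma'$ (so $y$ is itself a separation point) or $\gamma=\gamma'$, a Klingenberg-type situation in which two distinct minimizers collapsing to the same limit force $y$ to be conjugate to $N$ along $\gamma$ via the second variation formula. Case (b): infinitely many $y_n$ are first conjugate points, so $\det D_{t_n u_n}\Exp_{x_n}=0$; passing to the limit by continuity of the differential of the transverse exponential gives $\det D_{tu}\Exp_{x}=0$, so $y$ is conjugate to $N$ along $\gamma$. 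In either subcase, $y\in\Cut N$.

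For the second claim, separation points trivially lie in the closure of themselves, so only pure first-conjugate cut points $y=\gamma(t)$ require argument. Here the plan is to exploit the folding behaviour of $\Exp_N$ near the conjugate locus: a non-zero element of $\ker D_{tu}\Exp_x$ generates a Jacobi field along $\gamma$ vanishing at the endpoints, and by Warner's regularity theorem the exponential map near $(x,tu)$ has a multi-sheeted local structure, so arbitrarily close to $y$ there are points hit by two distinct nearly-minimizing geodesics; choosing these geodesics minimizing (possible because past the first conjugate point $\gamma$ itself ceases to minimize) produces a sequence of separation points tending to $y$. The main obstacle in the whole argument is exactly this last step and the $\gamma=\gamma'$ branch of Case (a): both lie beyond soft compactness and require the second variation of arc length together with a careful analysis of the exponential map along conjugate directions, which is the technical heart of the proposition and the reason to cite \cite{BP2020} for the detailed computation.
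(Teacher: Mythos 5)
The paper itself offers no proof of this proposition; it is cited from \cite[Theorem 3.26]{BP2020} and used as a black box, so there is no in-paper argument to compare yours against, and the review is of your blind attempt on its own merits. The closedness half is essentially correct and is the standard argument: compactness of $N$ (hence of $(\U\nu)N$) gives convergent subsequences of foot points and initial normal directions, completeness of $M$ together with continuity of $d(\cdot,N)$ produces a limiting $N$-segment $\gamma$, and your case split for verifying $y\in\Cut N$ is sound. In Case (a) with $\gamma=\gamma'$, the Klingenberg collapsing argument indeed yields a nontrivial Jacobi field vanishing at both ends; in Case (b), passing to the limit gives $\det D_{tu}\Exp_x=0$, and because the limit $\gamma|_{[0,t]}$ is a minimizer (so has no interior conjugate points) $y$ is the \emph{first} conjugate point along $\gamma$, which forces $y\in\Cut N$.

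The density half has a genuine gap that the appeal to Warner does not close. Warner's regularity theorem gives a fold/cusp normal form for the transverse exponential only at \emph{regular} conjugate vectors (those where $\ker D\Exp$ is one-dimensional and certain transversality conditions hold); the conjugate cut point $y$ you are trying to approximate need not arise from a regular conjugate vector, and Warner's density of regular vectors is a statement about the tangential conjugate locus, not about $\Cut N$. Worse, even granting a local fold, the two $\Exp_N$-preimages of a nearby point $y'$ only supply two short geodesics from $N$ to $y'$, not two \emph{minimizing} ones: the observation that $\gamma$ ceases to minimize past its first conjugate point shows the obvious geodesic fails past $y$, but does not by itself produce a second $N$-segment to $y'$ certifying it as a separation point. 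Upgrading the local fold geometry to a pair of genuine minimizers --- typically by comparing lengths against $d(\cdot,N)$ and using continuity of the cut time function --- is precisely the nontrivial content of the cited theorem, and your sketch does not carry out that step.
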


In practice, the cut locus is very hard to compute, since in general it has a wild structure, like a stratified manifold. For instance, the usual metric on the sphere $S^n$ can be deformed around the equator in such a way that the North Pole has a non-triangulable cut locus \cite[Theorem A]{GS1978}.
  In general, the cut locus of a surface can have branch points.
  
  On the other hand, in any compact manifold $M$ there exists some metric such that the cut locus of any point is triangulable (\cite[p. 348]{GS1978}).

 Fortunately, the situation is much better in  Lie groups  and homogeneous spaces, endowed with their natural structures and metrics, which are known to be analytic manifolds. 
  
  \begin{theorem}[Buchner]
(  \cite{Bu1977},\cite[Theorem 3.9]{BP2020}) Let $M$ be an analytic manifold of dimension $m$, and let $N$ be an analytic submanifold. Then the cut locus $\Cut N$   is a simplicial complex of dimension strictly less than $m$.  
  \end{theorem}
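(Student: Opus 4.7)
The plan is to realize $\Cut N$ as a subanalytic subset of $M$ and then invoke the triangulation theorem for subanalytic sets.

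First I would introduce the \emph{cut-distance function} $\tau\colon (\U\nu)N\to (0,\infty]$, assigning to each unit normal vector $u\in(\U\nu)_xN$ the supremum of those $t_0>0$ for which $\gamma_u|_{[0,t_0]}$ is an $N$-segment. Standard Riemannian arguments (independent of analyticity) give continuity of $\tau$ on the open set where it is finite, and the tautological identity
\begin{equation*}
\Cut N \;=\; \Exp\bigl(\{(u,\tau(u)) : u\in(\U\nu)N,\ \tau(u)<\infty\}\bigr)
\end{equation*}
exhibits $\Cut N$ as the image, under the analytic map $\Exp$, of the graph of $\tau$ over $\{\tau<\infty\}$.

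The technical heart of the argument — and the step I expect to be the main obstacle — is showing that this graph is a subanalytic subset of $(\U\nu)N\times\mathbb{R}$. By the structural description recalled before the statement, a pair $(u,t)$ lies on the graph precisely when (i) the Jacobian $\det D_{tu}\Exp_x$ vanishes (a \emph{conjugate point}, cut out by an analytic equation), or (ii) there exist $(x',u',t')$ with $t'\le t$, $(x',u')\ne(x,u)$, and $\Exp_{x'}(t'u')=\Exp_x(tu)$ (a \emph{separation point}, described by an analytic equation composed with a proper projection). In both cases one must impose the \emph{first-time} condition ``no smaller $t$ works'', which again is a projected analytic condition. The class of subanalytic sets is stable under finite unions, intersections, complements, and proper analytic images (Gabrielov's theorem), so these operations keep us inside the subanalytic class. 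Properness is arranged by working locally: fix a compact piece of $N$ and restrict $t$ to a compact interval, which is enough since continuity of $\tau$ and compactness of a tubular region give a local bound.

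Once subanalyticity of the graph is established, analyticity of $\Exp$ and Proposition \ref{CLOSED} (which tells us $\Cut N$ is closed, hence locally closed) imply that $\Cut N$ is a closed subanalytic subset of $M$. The Lojasiewicz--Hironaka triangulation theorem for subanalytic sets then endows $\Cut N$ with the structure of a simplicial complex, which settles the first conclusion.

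For the dimension bound, I would simply count: the unit normal bundle has
\begin{equation*}
\dim (\U\nu)N \;=\; \dim N + (m-\dim N - 1) \;=\; m-1,
\end{equation*}
so the graph of $\tau$ has dimension at most $m-1$, and its image under the smooth map $\Exp$ cannot increase this dimension. Hence $\dim\Cut N\le m-1<m$, completing the proof. The clean part of the argument is the dimension count and the reduction to Hironaka; the delicate part, as noted, is verifying subanalyticity of the graph of $\tau$, where analyticity of the ambient manifold and the submanifold is used in an essential way.
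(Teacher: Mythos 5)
The paper does not prove this theorem; it states it as a cited result of Buchner (\cite{Bu1977}, see also \cite[Theorem 3.9]{BP2020}), so there is no in-paper argument to compare against. Your sketch is in fact the outline of Buchner's original proof: exhibit $\Cut N$ as a subanalytic subset via the graph of the cut-distance function, invoke the Lojasiewicz--Hironaka triangulation theorem, and read off the dimension bound from $\dim(\U\nu)N = m-1$. So the route you chose is the standard (and essentially the only known) one, and the dimension count is correct.

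Two places deserve more care than your sketch gives them. First, your encoding of the separation condition is not quite the graph of $\tau$. You write ``there exist $(x',u',t')$ with $t'\le t$, $(x',u')\ne(x,u)$, $\Exp_{x'}(t'u')=\Exp_x(tu)$,'' but a separation cut point requires $t'=t$ exactly, with both $\gamma_u|_{[0,t]}$ and $\gamma_{u'}|_{[0,t]}$ being $N$-segments; if $t'<t$, then $\gamma_u|_{[0,t]}$ already fails to be minimizing, i.e.\ $\tau(u)<t$ and $(u,t)$ is \emph{not} on the graph. Encoding the graph (as opposed to the sublevel set $\{\tau\le t\}$) requires conjoining the coincidence condition with the minimality of $\gamma_u$ up to time $t$, and the ``first-time'' quantifier does not repair the $t'\le t$ slip. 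Second, the properness needed for Gabrielov's theorem is genuinely delicate and is where both Buchner and Basu--Prasad do real work; your appeal to ``fix a compact piece of $N$ and restrict $t$'' silently uses compactness of $N$ and completeness of $M$, which the quoted statement omits but the cited references assume. These are fixable, but as written the argument would not yet constitute a proof of the subanalyticity step you correctly identify as the crux.
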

  
\begin{example}
 (\cite[Theorem 3.23]{BP2020}) The cut locus of a point in a real analytic closed orientable surface of genus $g$ is  a connected graph, homotopically equivalent to a wedge of $2g$ circles (cf. the torus in Example \ref{EXTORUS}).
 \end{example}

\subsection{The distance function}

The result we are interested in is the following theorem (\cite[Theorem 3.28]{BP2020}):

\begin{theorem}\label{BUCHNER} Let $N$ be a closed embedded submanifold of a complete Riemannian manifold $M$. Let
$d \colon M \to \R$ be the distance function with respect to $N$. The restriction of the function $\Phi=d^2$  to $M \setminus \Cut N$ is
a Morse-Bott function, with $N$ as the critical submanifold. As a consequence, the gradient flow deforms $M\setminus \Cut N$ to $N$.
\end{theorem}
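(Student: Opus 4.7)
The plan is to first establish that on $M\setminus \Cut N$ every point is reached by a unique length-minimizing unit-speed geodesic emanating orthogonally from $N$, and then to exploit this to write $\Phi=d^2$ explicitly in Fermi-type coordinates, where the Morse--Bott property becomes a direct computation.

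More precisely, I would begin by recalling from the preliminaries that for $y\in M\setminus \Cut N$ (and $y\notin N$) there is a unique $N$-segment $\gamma\colon [0,d(y,N)]\to M$ with $\gamma(d(y,N))=y$; equivalently, there exist unique $x=\pi(y)\in N$, $u\in (\U\nu)_xN$ and $t=d(y,N)\geq 0$ such that $y=\Exp_x(tu)$, and this $y$ lies before the first conjugate point and before any separation point. Hence the restriction of the normal exponential map $\Exp\colon \Omega\subset \nu N \to M\setminus \Cut N$ is a diffeomorphism from the open ``injectivity domain'' $\Omega\subset \nu N$ onto $M\setminus \Cut N$. Pulling back $\Phi=d^2$ through this diffeomorphism gives $\Phi(\Exp_x(v))=|v|^2$, which is smooth; therefore $\Phi$ is smooth on $M\setminus \Cut N$.

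Next, I would work in Fermi coordinates around an arbitrary $x_0\in N$: pick coordinates $(x^1,\dots,x^n)$ on $N$ near $x_0$ and a local orthonormal frame $(e_1,\dots,e_{m-n})$ of $\nu N$, and set $\Psi(x,t):=\Exp_x(\sum_\alpha t^\alpha e_\alpha(x))$. In these coordinates, as noted above, $\Phi\circ\Psi(x,t)=\sum_\alpha (t^\alpha)^2$ exactly. From this expression the computation is immediate: $\partial\Phi/\partial x^i\equiv 0$ and $\partial\Phi/\partial t^\alpha = 2t^\alpha$, so $\crit\Phi\cap (M\setminus\Cut N)=N$; and the Hessian at points of $N$ is $\diag(0,\dots,0,2,\dots,2)$, which is non-degenerate in the $(m-n)$ directions transverse to $N$. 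This is precisely the Morse--Bott condition, with $N$ as the (possibly disconnected) critical submanifold of index $0$.

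Finally, for the deformation statement, the negative gradient flow satisfies $\dot\Phi = -|\grad\Phi|^2 = -4\Phi$ along trajectories, so $\Phi(\varphi(y,s))=\Phi(y)e^{-4s}$ and trajectories remain inside $M\setminus\Cut N$ (distance strictly decreases, and in Fermi coordinates the flow is explicitly $(x,t)\mapsto (x, e^{-2s}t)$, which stays in the injectivity domain). Letting $s\to \infty$ yields the retraction onto $N$, and by rescaling the time parameter to $[0,1]$ we obtain a strong deformation retract of $M\setminus\Cut N$ onto $N$.

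The main obstacle I anticipate is the global smoothness claim: what takes work is showing that the normal exponential map is a diffeomorphism onto $M\setminus \Cut N$ (as opposed to only a local diffeomorphism near $N$), which relies on the two-sided characterization of cut points as first conjugate or first separation points. Once that is in place, the local Fermi-coordinate description and the gradient flow computation are straightforward.
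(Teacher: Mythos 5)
The paper does not prove this theorem; it is quoted from Basu--Prasad \cite[Theorem 3.28]{BP2020}, so there is no in-paper argument to compare your proof against. Your proposal is correct and is essentially the standard argument. The two load-bearing facts are exactly the ones you isolate: that the normal exponential map restricted to the open injectivity domain $\Omega\subset\nu N$ is a diffeomorphism onto $M\setminus\Cut N$ (injectivity from uniqueness of the $N$-segment to a non-cut point, local diffeomorphism because the first conjugate point never precedes the cut point, openness of $\Omega$ from continuity of the cut-time function as in \cite{IT2001}), and that $\Phi\bigl(\Exp_x(v)\bigr)=|v|^2$ there, so in Fermi coordinates $\Phi=\sum_\alpha(t^\alpha)^2$ exactly, which makes the critical set and the positive-definiteness of the transverse Hessian along $N$ a one-line verification. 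Two small remarks. First, the coordinate Hessian $\diag(0,\dots,0,2,\dots,2)$ is only tensorial at critical points, but that is precisely where you evaluate it, so the conclusion is sound. Second, for the deformation the gradient flow is not strictly necessary: since $\Omega$ is star-shaped about the zero section, the linear homotopy $\Exp_x(v)\mapsto\Exp_x\bigl((1-\tau)v\bigr)$ is already a strong deformation retraction of $M\setminus\Cut N$ onto $N$; if you do route through the flow, both the identity $|\grad\Phi|^2=4\Phi$ and the explicit coordinate form $(x,t)\mapsto(x,e^{-2s}t)$ rest on the (tacitly used) fact that $\grad d$ is the unit tangent to the unique $N$-segment, a Gauss-lemma/first-variation statement worth flagging. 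Your proof would serve as a self-contained substitute for the citation.
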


Hence, we have the following formula.

\begin{theorem}\label{MAINCUT}Let $M$ be a compact analytic  manifold and $N$ a closed analytic submanifold. Let $f,g\colon  M \to Y$ be two continuous maps. Then
$$D(f,g) \leq D_M(N; f,g) + D_M(\Cut N; f,g)+1.$$
\end{theorem}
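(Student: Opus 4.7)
The plan is to mimic the proof of Theorem \ref{MORSEBOTT}, but with the two-piece open cover $\{M\setminus\Cut N,\ U\}$, where $U$ is an open thickening of the cut locus, and then apply the sub-additivity property of Proposition \ref{lema:subadditivity}.

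First, I would verify that both pieces are open in $M$. By Proposition \ref{CLOSED}, the set $\Cut N$ is closed, so $V := M\setminus \Cut N$ is open. For the other piece, by Buchner's theorem (and the analyticity hypothesis on $M$ and $N$), the cut locus $\Cut N$ is a simplicial complex, hence an ENR. Since $M$ is a compact manifold, it is also an ENR, and thus Proposition \ref{ENRS} supplies an open neighbourhood $\Cut N\subset U\subset M$ together with a retraction $r\colon U\to \Cut N$ for which the inclusion $i_U\colon U\hookrightarrow M$ is homotopic to $i_{\Cut N}\circ r$. By construction $V\cup U = M$.

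Next I would identify the two subspace distances. Applying Proposition \ref{prop:homot_invariance} to the inclusion $\alpha = r\colon U\to \Cut N$ (a homotopy equivalence because it has as homotopy inverse the inclusion $\Cut N\hookrightarrow U$, which together with $r$ satisfies $i_U\simeq i_{\Cut N}\circ r$) yields
\[
\Di_M(U;f,g)=\Di_M(\Cut N;f,g).
\]
On the other piece, Theorem \ref{BUCHNER} tells us that $\Phi=d(\cdot,N)^2$ restricts to a Morse-Bott function on $V=M\setminus\Cut N$ whose critical submanifold is $N$, and the negative gradient flow deforms $V$ onto $N$. In particular $N\subset V$ and the inclusion $\alpha\colon N\hookrightarrow V$ is a homotopy equivalence with $i_V\circ\alpha = i_N$. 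A second application of Proposition \ref{prop:homot_invariance} gives
\[
\Di_M(V;f,g)=\Di_M(N;f,g).
\]

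Finally I would combine these identities with Proposition \ref{lema:subadditivity} applied to the open cover $\{V,U\}$:
\[
\Di(f,g)+1\ \leq\ \bigl(\Di_M(V;f,g)+1\bigr)+\bigl(\Di_M(U;f,g)+1\bigr)\ =\ \Di_M(N;f,g)+\Di_M(\Cut N;f,g)+2,
\]
which is exactly the desired bound.

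The only real obstacle is ensuring that the cut locus is a sufficiently nice subspace to apply the ENR machinery; this is precisely where the analyticity hypothesis enters, via Buchner's triangulability theorem. Everything else is a direct assembly of Proposition \ref{ENRS}, Proposition \ref{prop:homot_invariance}, Theorem \ref{BUCHNER} and the sub-additivity lemma.
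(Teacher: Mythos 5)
Your proposal is correct and follows essentially the same route as the paper: invoke Buchner's theorem to conclude that $\Cut N$ is a finite simplicial complex and hence an ENR, use Proposition~\ref{ENRS} to thicken it to an open set $U$ with $\Di_M(U;f,g)=\Di_M(\Cut N;f,g)$, note that $M\setminus\Cut N$ is open (Proposition~\ref{CLOSED}) and deformation-retracts onto $N$ by the Morse--Bott gradient flow of $d(\cdot,N)^2$ (Theorem~\ref{BUCHNER}), and apply sub-additivity (Proposition~\ref{lema:subadditivity}) to the two-element open cover $\{M\setminus\Cut N,\,U\}$. One small remark: you justify $\Di_M(U;f,g)=\Di_M(\Cut N;f,g)$ by asserting that $r\colon U\to\Cut N$ is a homotopy equivalence with inverse the inclusion, but Proposition~\ref{ENRS} only provides a homotopy $i_U\simeq i_{\Cut N}\circ r$ \emph{inside $M$}, not inside $U$, so $r$ need not be a homotopy equivalence on the nose; the cleaner route (used in Corollary~\ref{TUBE}) is to observe directly that any homotopy $f_{\vert A}\simeq g_{\vert A}$ on a piece $A\subset\Cut N$ propagates to $r^{-1}(A)\subset U$ via the chain $f_{\vert U}\simeq f\circ i_{\Cut N}\circ r\simeq g\circ i_{\Cut N}\circ r\simeq g_{\vert U}$, which yields the equality of subspace distances without invoking a homotopy inverse for $r$.
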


\begin{proof}It is known that a finite simplicial complex is an ENR \cite[Corollary A.8]{Ha2002}.  
Hence  $\Cut N$ is an ENR, by Theorem \ref{BUCHNER}. By Corollary \ref{TUBE} there is an  open neighbourhood $U$ such that $\Di_M(\Cut N; f,g)=\Di_M(U;f,g)$. Also, we now that $M\setminus \Cut N$ is open in $M$, by Proposition \ref{CLOSED}. Hence
$$M=(M\setminus \Cut N)  \cup \Cut N  =(M\setminus \Cut N) \cup U,$$
so by the subaditivity property (Proposition \ref{lema:subadditivity}) we have 
$$\Di(f,g)\leq \Di_M(N;f,g)  + \Di_M(\Cut N;f,g) +1,$$
because $M\setminus \Cut N$ and $N$ are homotopically equivalent in $M$, so Proposition \ref{prop:homot_invariance} applies.
\end{proof} 

\begin{corollary}\label{CORCAT}
By taking the identity and a constant function, we have 
$$\cat\nolimits M \leq \cat\nolimits_M(N) + \cat\nolimits_M(\Cut N).$$
\end{corollary}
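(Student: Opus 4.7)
My plan is to deduce the corollary as a direct specialization of Theorem \ref{MAINCUT} to the pair of maps consisting of the identity and a constant. Concretely, I would fix any base point $x_0\in M$ (available because $M$ is path-connected) and set $f=\id_M$ and $g=x_0$, where the latter denotes the constant map sending all of $M$ to the point $x_0$.

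The first step is to translate each term appearing in Theorem \ref{MAINCUT} from the language of homotopic distance into the language of L-S category. The identification $\Di(\id_M,x_0)=\cat M$ is one of the examples recorded in Section \ref{TRES}. More generally, for any subspace $A\subseteq M$ with inclusion $i_A\colon A\hookrightarrow M$, the subspace homotopic distance satisfies $\Di_M(A;\id_M,x_0)=\Di(i_A,x_0)=\cat_M A$; this is because an open cover of $A$ witnessing the left-hand side is precisely an open cover of $A$ by subsets on which $i_A$ is null-homotopic in $M$, i.e., by subsets contractible in $M$, which is exactly the cover asked for by the subspace L-S category in Definition \ref{LSCATEG}.

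The second step is the substitution itself. Taking $A=N$ and $A=\Cut N$ in the conclusion of Theorem \ref{MAINCUT} and applying the dictionary above to the three homotopic-distance terms (the one on the left-hand side and the two on the right-hand side) directly delivers the inequality of the corollary.

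I do not anticipate any substantive obstacle, because the corollary is a formal translation of Theorem \ref{MAINCUT}. All of the topological work---triangulability of $\Cut N$ coming from analyticity, the ENR extension principle of Proposition \ref{ENRS} and Corollary \ref{TUBE}, the Morse--Bott character of $d(\cdot,N)^{2}$ away from the cut locus supplied by Theorem \ref{BUCHNER}, and the subadditivity of homotopic distance in Proposition \ref{lema:subadditivity}---has already been absorbed in the proof of that theorem, so no further construction is needed.
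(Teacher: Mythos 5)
Your approach is exactly the paper's: the corollary is stated with no proof beyond the phrase ``by taking the identity and a constant function,'' i.e.\ it is meant as the specialization of Theorem \ref{MAINCUT} to $f=\id_M$ and $g=x_0$, and your dictionary $\Di(\id_M,x_0)=\cat M$, $\Di_M(A;\id_M,x_0)=\Di(i_A,x_0)=\cat_M A$ is the same one the paper records in Section \ref{TRES}. That part is fine.

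The one point you gloss over is the additive constant. The substitution into Theorem \ref{MAINCUT} delivers
$$\cat M \;\leq\; \cat\nolimits_M(N) + \cat\nolimits_M(\Cut N) + 1,$$
which is the version quoted in the paper's introduction, \emph{not} the display in Corollary \ref{CORCAT}, which omits the $+1$. So your claim that the specialization ``directly delivers the inequality of the corollary'' is not literally true. The $+1$ cannot be dropped in general: take $M=S^n$ and $N=\{\NP\}$ the north pole, so that $\Cut N=\{\SP\}$; then $\cat_M(N)=\cat_M(\Cut N)=0$ while $\cat S^n=1$. Hence the corollary as printed is a typo (the introduction's version with $+1$ is the correct consequence), and your proof, carried out honestly, proves that corrected statement rather than the one displayed. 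You should state explicitly which inequality your argument yields.
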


Now, in order to have an analogous result for the topological complexity, we can take the projections $p_1,p_2\colon M \times M \to M$. 
\begin{corollary}
$\TC(M)\leq  \TC_M(N\times N) + \TC_M(\Cut (N \times N)+1$.
\end{corollary}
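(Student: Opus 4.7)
The plan is to apply Theorem \ref{MAINCUT} to the product manifold $M \times M$, equipped with the product Riemannian (and analytic) structure, taking the submanifold to be $N \times N$ and the two maps to be the projections $p_1, p_2 \colon M \times M \to M$. This is exactly parallel to how Corollary \ref{CORCAT} is obtained from Theorem \ref{MAINCUT} by specializing the pair of maps to $(\id_M, \text{constant})$.

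First I would check the hypotheses of Theorem \ref{MAINCUT}. Since $M$ is compact and analytic, $M \times M$ is again a compact analytic manifold (with the product analytic structure); and since $N$ is a closed analytic submanifold of $M$, the product $N \times N$ is a closed analytic submanifold of $M \times M$. With these inputs, Theorem \ref{MAINCUT} applied to $f = p_1$, $g = p_2$ yields
$$\Di(p_1, p_2) \leq \Di_{M \times M}(N \times N;\, p_1, p_2) + \Di_{M \times M}(\Cut(N \times N);\, p_1, p_2) + 1,$$
where $\Cut(N \times N)$ is the cut locus of $N \times N$ in $M \times M$ computed with respect to the product Riemannian metric.

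Second, I would translate this into the topological complexity language using the identifications recalled in Section \ref{TRES}: on the one hand $\TC(M) = \Di(p_1, p_2)$, and on the other hand, by the example following Definition \ref{SUBDIST}, $\TC_M(A) = \Di_{M \times M}(A;\, p_1, p_2)$ for every subspace $A \subseteq M \times M$. Substituting into the inequality above produces the announced bound.

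There is no substantive obstacle, as the corollary is a direct specialization of Theorem \ref{MAINCUT}. The only point worth flagging is interpretational: $\Cut(N \times N)$ must be read as the cut locus of $N \times N$ inside $M \times M$ under the product metric, and not built a priori from $\Cut N$ inside $M$; relating the two (for instance via an inclusion $\Cut(N \times N) \subseteq (\Cut N \times M) \cup (M \times \Cut N)$) would be a separate question and is not needed for the stated inequality.
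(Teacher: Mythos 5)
Your proof is correct and matches the paper's intent exactly: the corollary is obtained by applying Theorem \ref{MAINCUT} to the compact analytic manifold $M\times M$, the closed analytic submanifold $N\times N$, and the two projections $p_1,p_2$, then identifying $\Di(p_1,p_2)=\TC(M)$ and $\Di_{M\times M}(A;p_1,p_2)=\TC_M(A)$. Your remark that $\Cut(N\times N)$ means the cut locus inside $M\times M$ under the product metric is also what the paper intends, as confirmed by the discussion of Crittenden's formula immediately following.
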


This result may be seen as a formalization of the following comment by  Blaszczyk and Carrasquel in \cite{BC2018}: ``in order to estimate topological
complexity of $X$, it is enough to understand how to motion plan between
points $p, q\in X$ with $q \in\Cut(p)$''.

Unfortunately, we do not know any explicit formula for the cut locus of $N \times N$ in  $M\times M$. For a point $N=\{x\}$, the following formula is proven by Crittenden in \cite[p. 328]{Cr1962} (compare with example \ref{EXTORUS}):
$$\Cut\nolimits_{M\times M} ((x,x)) =(\Cut\nolimits_M(x)\times M) \cup (M \times \Cut\nolimits_M(x)).$$
\begin{remark}
Notice that in general it is {\em not} true that $\TC(M)$ is bounded by $\TC_X(N)+\TC_X(\Cut N)+1$. For instance, take $M=S^n$ for $n$ even, and as submanifold $N$ the north pole $\NP$. Since $\Cut(\NP)=\{\SP\}$, the south pole, we have $\TC_{ X}(\NP)+\TC_{ X}(\SP)+1=1$ but $\TC(S^n)=2$ (\cite[Theorem 8]{Fa2003}). \end{remark}

\section{Motion planning}\label{MOTPLAN}

Both L-S category and topological complexity  measure the difficulty of finding continuous  motion planning algorithms for the configuration space $X$ of a mechanical device. We can interpret the homotopic distance between two maps $f,g\colon X \to Y$ in the same way, because it solves the following:

\medskip

{\em generalized planning problem}: given an arbitrary point $x\in X$ find a continuous path $s(x)$, joining $f(x)$ and $g(x)$ in $Y$, in such a way such that the path $s(x)$ depends continuously on $x$. 

\medskip

More precisely, let $\mathcal{P}(f,g)$ be the space of pairs $(x,\gamma)$ where $x\in X$ and $\gamma$ is a continuous path on $Y$, such that $\gamma(0)=f(x)$ and $\gamma(1)=g(x)$. This space fibers over $X$, by taking the map $\pi^{*}\colon \mathcal{P}(f,g)\to X$, where $\pi^*(x,\gamma)=x$.  Notice that $\pi^*=(f,g)^*\pi\colon \mathcal{P}(f,g)\to X$ is the pullback fibration of the path fibration  $\pi\colon Y^I\to Y\times Y$, where $\pi(\gamma)=(\gamma(0),\gamma(1))$, by the map $(f,g)\colon X\to Y\times Y$:
$$\begin{tikzcd}
	P \arrow[d,"{\pi^{*}}",swap]\arrow[r] & PY\arrow[d,"{\pi}"]\\
	X \arrow[r,"{(f,g)}"] & {Y\times Y}.
	\end{tikzcd}$$

\begin{proposition}\cite[Theorem 2.7]{MM2020}\label{prop:genus_distance} The homotopic distance between the maps  $f,g\colon X\to Y$ equals the {\em Svarc genus} of $\pi^*$, that is, the minimum number $n\geq 0$ such that there exists an open covering  $U_0\cup\cdots\cup U_n=X$, where for each $U_j$ there is a continuous section $s_j\colon U_j \to \mathcal{P}(f,g)$ of the pullback fibration $\pi^*$.
\end{proposition}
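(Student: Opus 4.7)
The plan is to establish the equality $\Di(f,g)=\operatorname{genus}(\pi^{*})$ by constructing, in both directions, a natural bijection between local homotopies from $f|_U$ to $g|_U$ and continuous sections of $\pi^{*}$ over $U$. The whole argument is a repeated application of the exponential adjunction between maps $U\times I\to Y$ and maps $U\to Y^{I}$, together with the universal property of the pullback defining $\mathcal{P}(f,g)$.

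First I would show that $\Di(f,g)\le\operatorname{genus}(\pi^{*})$. Suppose $U_{0}\cup\cdots\cup U_{n}=X$ is an open cover with continuous sections $s_{j}\colon U_{j}\to\mathcal{P}(f,g)$ of $\pi^{*}$. By definition of the pullback, $s_{j}$ assigns to each $x\in U_{j}$ a path $\gamma_{x}\in Y^{I}$ with $\gamma_{x}(0)=f(x)$ and $\gamma_{x}(1)=g(x)$, depending continuously on $x$. The adjoint map $H_{j}\colon U_{j}\times I\to Y$, $H_{j}(x,t)=\gamma_{x}(t)$, is continuous (here we use that $I$ is locally compact Hausdorff, so the exponential law applies), and it is a homotopy from $f|_{U_{j}}$ to $g|_{U_{j}}$. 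Thus the same open cover witnesses $\Di(f,g)\le n$.

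Conversely, to show $\operatorname{genus}(\pi^{*})\le\Di(f,g)$, take an open cover $U_{0}\cup\cdots\cup U_{n}=X$ together with homotopies $H_{j}\colon U_{j}\times I\to Y$ with $H_{j}(\,\cdot\,,0)=f|_{U_{j}}$ and $H_{j}(\,\cdot\,,1)=g|_{U_{j}}$. Again by the exponential adjunction, the adjoint map $\widehat{H}_{j}\colon U_{j}\to Y^{I}$, $\widehat{H}_{j}(x)(t)=H_{j}(x,t)$, is continuous, and by construction $\pi\circ\widehat{H}_{j}=(f,g)\circ i_{U_{j}}$. The universal property of the pullback then gives a unique continuous map $s_{j}\colon U_{j}\to\mathcal{P}(f,g)$ with $\pi^{*}\circ s_{j}=i_{U_{j}}$, i.e.\ a section of $\pi^{*}$ over $U_{j}$.

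Combining the two directions yields the equality. The only mildly delicate point—the main thing to watch—is the exponential law: one must know that the assignment $\mathrm{Map}(U\times I,Y)\cong\mathrm{Map}(U,Y^{I})$ is a homeomorphism under the compact–open topology on $Y^{I}$, which holds because $I$ is locally compact Hausdorff regardless of $U$. Everything else is formal manipulation with the pullback diagram
$$
\begin{tikzcd}
\mathcal{P}(f,g)\arrow[d,"\pi^{*}",swap]\arrow[r]&Y^{I}\arrow[d,"\pi"]\\
X\arrow[r,"{(f,g)}"]&Y\times Y,
\end{tikzcd}
$$
so no genuine obstacle arises beyond checking continuity through this adjunction.
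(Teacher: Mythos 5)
The paper does not actually prove this proposition; it is imported verbatim as \cite[Theorem~2.7]{MM2020}, so there is no in-text argument to compare against. Your proof is correct and is the standard one: the exponential adjunction identifies continuous sections of $\pi^*$ over $U_j$ (equivalently, maps $U_j\to Y^I$ lying over $(f,g)$) with homotopies $U_j\times I\to Y$ from $f|_{U_j}$ to $g|_{U_j}$, and local compactness of $I$ is exactly what is needed for the direction from a section to a homotopy. This is surely the argument in [MM2020] (or an equivalent one), so your approach matches what the cited source would give.
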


This situation covers many different scenarios:

\subsection{L-S category:}
For $f=\id_X$ and $g=x_0$ a constant map,  we have $\cat X=\Di(\id_X,x_0)$. A homotopy between $\id_X$ and $x_0$ gives a continuous path $H_t(x)$ between any point $x$ and the fixed target $x_0$.

\subsection{Topological complexity:}
This was Farbers's original idea for the motion planning problem.
For the projections $p_1,p_2 \colon X\times X \to X$, one has $\Di(p_1,p_2)=\TC(X)$. A homotopy $H_t(x,y)$ between the  projections gives a path  joining two arbitrary  points $x,y$.

\subsection{Naive topological complexity of the work map:}
A more elaborated situation, studied by Farber \cite[p. 5]{Fa2008} and later by Murillo and Wu \cite{MW2019}, occurs when $X$ is the configuration space of a multi-arm robot, and $Y$ is the ``workspace'', that is, the spatial region that can effectively be attained by the
end device of the arm.  

A so-called ``work map'' $f\colon X \to Y$ (also called a {\em forward kinematic map})  assigns to each state of the configuration space the corresponding position  of
the end effector. This map is not assumed to be bijective. When implementing
algorithms which control the task performed by the robot,
the input of such an algorithm is a pair $(x,y)\in X\times X$   of
points of the workspace, and the output is a curve in the configuration space, connecting the positions $f(x)$ and $f(y)$. The corresponding invariant is called the {\em naive topological complexity} of the work map $f$, denoted by $\tc(f)$. More precisely:
\begin{definition}
Let $f\colon  X \to Y$ be a continuous map and denote by $\pi\colon P(X)\to X\times X$ the path fibration. The {\em (normalized) naive or strict topological complexity of $f$}, denoted $\tc(f)$,  is the least integer $n$ such that $X\times X$ can be covered by $n+1$ open subsets $\U_0,\dots,U_n$, such that for each $U_i$ there exists a continuous map $\sigma_i\colon U_i\to X^{I}$ satisfying $$(f\times f)\circ \pi \circ \sigma_i=(f\times f)_{\vert U_i}.$$
\end{definition}

Later on, Scott proved (\cite[Theorem 3.4]{Sc2020}) that the previous definition can be equivalently stated as follows:

\begin{proposition}
Let $f\colon  X \to Y$ be a continuous map. The    naive   topological complexity  $\tc(f)$ equals the least integer $n$ such that $X\times X$ can be covered by $n+1$ open subsets $\{U_i\}_{i=0}^n$ such that for each $U_i$ there exists a continuous map $f_i\colon U_i\to Y^{I}$ satisfying $f_i(x_0,x_1)(0)=f(x_0)$ and $f_i(x_0,x_1)(1)=f(x_1)$.
\end{proposition}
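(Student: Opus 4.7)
My plan is to prove the claimed equality $\tc(f)=m$ (with $m$ the right-hand side) by two separate inequalities, ultimately reducing the harder one to the Homotopy Lifting Property via a fibration replacement. The easy inequality $m\leq\tc(f)$ I would handle immediately: given a Murillo--Wu cover $\{U_i\}_{i=0}^{\tc(f)}$ with sections $\sigma_i\colon U_i\to X^I$ satisfying $(f\times f)\circ\pi\circ\sigma_i=(f\times f)|_{U_i}$, set $f_i(x_0,x_1)(t):=f(\sigma_i(x_0,x_1)(t))$. Continuity of the resulting $f_i\colon U_i\to Y^I$ follows from continuity of post-composition with $f$ in the compact-open topology, and the required endpoint conditions translate directly from those of $\sigma_i$.

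For the harder inequality $\tc(f)\leq m$, the first observation is that Scott's condition on $U_i$ says precisely that the restrictions $fp_1|_{U_i}$ and $fp_2|_{U_i}$ are homotopic as maps $U_i\to Y$ (where $p_1,p_2\colon X\times X\to X$ are the projections); thus $m=\Di(fp_1,fp_2)$. To convert such a homotopy in $Y$ into a Murillo--Wu section in $X^I$ one needs a lifting property, so I would replace $f$ by the Hurewicz fibration $p\colon M_f\to Y$ arising from the mapping path space $M_f=\{(x,\delta)\in X\times Y^I:\delta(0)=f(x)\}$, $p(x,\delta)=\delta(1)$. The canonical embedding $j\colon X\hookrightarrow M_f$, $j(x)=(x,c_{f(x)})$, is a homotopy equivalence satisfying $p\circ j=f$. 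Using that both $\Di$ and the Svarc-genus characterization of $\tc$ are homotopy invariants, the problem reduces to proving $\tc(p)\leq\Di(pq_1,pq_2)$ for $p$ and the projections $q_1,q_2$ on $M_f\times M_f$.

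The fibration case then follows from the Homotopy Lifting Property: a homotopy $H\colon U\times I\to Y$ between $pq_1|_U$ and $pq_2|_U$ has initial slice $H(m_0,m_1,0)=p(m_0)$, which admits the obvious lift $(m_0,m_1)\mapsto m_0$ in $M_f$, and by HLP there is $\tilde H\colon U\times I\to M_f$ with $\tilde H(m_0,m_1,0)=m_0$ and $p\circ\tilde H=H$. The map $\sigma\colon U\to M_f^I$ defined by $\sigma(m_0,m_1)(t):=\tilde H(m_0,m_1,t)$ is then a Murillo--Wu section for $p$ over $U$, since $p\sigma(m_0,m_1)(1)=H(m_0,m_1,1)=p(m_1)$; applying this to each piece $U_i$ of a Scott cover yields the inequality. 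The main obstacle I anticipate is the homotopy invariance of $\tc$ under the replacement of $f$ by the weakly equivalent fibration $p$: the invariance of $\Di$ is supplied by Proposition \ref{prop:homot_invariance}, but the Murillo--Wu definition involves strict equalities, and its invariance is most cleanly established by recognising $\tc(f)$ as the Svarc genus of the pullback of $\pi_Y\colon Y^I\to Y\times Y$ along $f\times f$, which by general principles depends only on the homotopy class of $f$.
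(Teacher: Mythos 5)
The paper itself does not prove this proposition: it is quoted as Scott's Theorem~3.4 \cite{Sc2020} without proof, so the only question is whether your argument is correct on its own. Your easy direction ($m\le\tc(f)$, obtained by post-composing a Murillo--Wu section $\sigma_i$ with $f$) is fine, and so is your HLP lemma for a genuine fibration $p\colon M_f\to Y$ (lifting the homotopy $H$ with initial slice $(m_0,m_1)\mapsto m_0$). The gap is in the step you yourself flag as ``the main obstacle.'' You propose to settle the needed homotopy invariance of the Murillo--Wu $\tc$ by ``recognising $\tc(f)$ as the Svarc genus of the pullback of $\pi_Y$ along $f\times f$.'' But by the universal property of the pullback, a section of $(f\times f)^*\pi_Y$ over $U_i$ is \emph{exactly} the data of a continuous map $f_i\colon U_i\to Y^I$ with $f_i(x_0,x_1)(0)=f(x_0)$ and $f_i(x_0,x_1)(1)=f(x_1)$. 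So the Svarc genus of $(f\times f)^*\pi_Y$ is, tautologically, the right-hand side $m$ of the proposition, and the identification $\tc(f)=\mathrm{secat}\bigl((f\times f)^*\pi_Y\bigr)$ that you invoke as a ``general principle'' is literally the statement you are trying to prove. Your argument is circular precisely at the point you correctly identified as the crux.

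The circularity is not a presentational issue: the specific inequality $\tc(f)\le\tc(p)$ that your reduction needs is genuinely not automatic from the Murillo--Wu definition. If you pull a Murillo--Wu section $\sigma_i\colon U_i\to M_f^I$ back along $j\times j$ and project to the $X$-coordinate, the endpoint condition fails, because for a point $(x,\delta)\in M_f$ one has $f(x)=\delta(0)$ whereas $p(x,\delta)=\delta(1)$; these differ unless $\delta$ is a loop. Converting a path in $Y$ (Scott data) into a path in $X$ satisfying the strict Murillo--Wu constraint is the real content of the proposition, and neither the mapping-path-space replacement nor the HLP step supplies a way to do it. As written, the proposal establishes only the easy inequality.
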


\begin{theorem}
Let $f\colon  X \to Y$ be a continuous map.  Then, the (normalized) naive topological complexity of $f$ equals the distance of the projections composed with the map $f$. That is, $\tc(f)=\Di(f\circ p_1, f\circ p_2)$. 
\end{theorem}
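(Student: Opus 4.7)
The plan is to derive the identity directly from Proposition \ref{prop:genus_distance}, applied to the pair $(f\circ p_1,\,f\circ p_2)\colon X\times X\to Y$, and then to match the resulting characterization of $\Di(f\circ p_1,\,f\circ p_2)$ with Scott's reformulation of $\tc(f)$ stated immediately before the theorem.

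First I would invoke Proposition \ref{prop:genus_distance} with $X\times X$ in place of the source and $(f\circ p_1,\,f\circ p_2)$ in place of $(f,g)$. This identifies $\Di(f\circ p_1,\,f\circ p_2)$ with the Svarc genus of the pullback path fibration
$$\pi^{*}\colon \mathcal{P}(f\circ p_1,\,f\circ p_2)\longrightarrow X\times X,$$
namely the least $n\ge 0$ for which there exists an open cover $U_0\cup\cdots\cup U_n=X\times X$ admitting a continuous section $s_j\colon U_j\to \mathcal{P}(f\circ p_1,\,f\circ p_2)$ of $\pi^{*}$ over each $U_j$.

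The remaining step is a straightforward unpacking of the pullback space. A continuous section over $U_j$ has, by definition, the form $s_j(x_0,x_1)=\bigl((x_0,x_1),\,f_j(x_0,x_1)\bigr)$, where $f_j\colon U_j\to Y^I$ is continuous and, because $\pi^{*}$ pulls back the path fibration along $(f\circ p_1,\,f\circ p_2)$, satisfies $f_j(x_0,x_1)(0)=(f\circ p_1)(x_0,x_1)=f(x_0)$ and $f_j(x_0,x_1)(1)=(f\circ p_2)(x_0,x_1)=f(x_1)$. Conversely, any continuous $f_j\colon U_j\to Y^I$ obeying these endpoint conditions assembles into a section of $\pi^{*}$. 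Thus local sections of $\pi^{*}$ over $U_j$ are in bijective correspondence with exactly the data entering Scott's characterization of $\tc(f)$, so the two minima coincide and $\tc(f)=\Di(f\circ p_1,\,f\circ p_2)$. No genuine obstacle is anticipated: the argument is essentially a dictionary between a section of a pullback fibration and a continuous family of endpoint-prescribed paths, with the bijection respecting the subspace topology on $\mathcal{P}(f\circ p_1,\,f\circ p_2)\subset X\times X\times Y^I$ so that continuity on either side forces continuity on the other.
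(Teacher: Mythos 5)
Your proposal is correct and follows essentially the same route as the paper: both apply Proposition \ref{prop:genus_distance} to the pair $(f\circ p_1, f\circ p_2)$ on $X\times X$, observe that $(f\circ p_1, f\circ p_2)=f\times f$, and identify local sections of the pullback path fibration with the endpoint-prescribed families $f_j\colon U_j\to Y^I$ appearing in Scott's reformulation of $\tc(f)$. The only cosmetic difference is that you unpack the section-to-path-family dictionary by hand, whereas the paper simply cites Scott's Theorem 3.4 for that equivalence.
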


\begin{proof}
Let us denote by $\pi\colon Y^I\to Y\times Y$ the path fibration. Then, Scott proved (\cite[Theorem 3.4]{Sc2020}) that there is an open subset $U_i\subset X\times X$ satisfying that there exists a continuous map $f_i\colon U_i\to Y^{I}$ satisfying $f_i(x_0,x_1)(0)=f(x_0)$ and $f_i(x_0,x_1)(1)=f(x_1)$ if and only if there is a section $s\colon U_i\to (f\times f)^{*}Y^{I}$ of the pullback fibration of $\pi$ by the map $f \times f$.
$$\begin{tikzcd}
	(f\times f)^{*}Y^{I} \arrow[d,"(f\times f)^*\pi",swap]\arrow[r] & Y^I\arrow[d,"{\pi}"]\\
	X\times X \arrow[r,"{(f\times f)}"] & {Y\times Y}
	\end{tikzcd}$$
Observe that $f\times f\colon X\times X\to Y \times Y$ is just the map $(f\circ p_1, f\circ p_2)\colon X\times X \to Y\times Y$. Therefore, by Proposition \ref{prop:genus_distance}, $\tc(f)=\Di(f\circ p_1, f\circ p_2)$. 	
\end{proof}

\subsection{Topological complexity of a map:}
The latter invariant $\tc(f)$ is different from $\cx(f)$, the {\em topological
complexity of a map} $f \colon X \to Y$, defined by Pavesic \cite{Pa2017} as the sectional complexity (that is, the number of partial solutions to the motion planning problem) of the
map 
$$\pi\colon X^I \to X\times Y, \quad\pi(\gamma)=\left(\gamma(0),f(\gamma(1))\right),$$
which assigns, to each path in  the configuration space, the initial
state and the end effector position at the final state. Later \cite{Pa2019}, he modified this definition, but in a way that does not alter the original one when $f$ is a fibration. We can interpret it as a distance: 
\begin{theorem}
Let $f\colon X\to Y$ be a fibration and consider the map $\pi\colon X^I \to X\times Y$, $\pi(\gamma)=\left(\gamma(0),f(\gamma(1))\right)$ and the projections $\pi_X\colon X\times Y \to X$ and $\pi_Y\colon X\times Y \to Y$. Then, $\cx(f)=\Di(f\circ \pi_X, \pi_Y)$.
\end{theorem}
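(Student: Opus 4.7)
The plan is to translate both sides of the claimed equality into the common language of Svarc genera of fibrations over $X\times Y$, and then identify which open sets admit local sections of each.

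First, by Proposition \ref{prop:genus_distance}, $\Di(f\circ \pi_X,\pi_Y)$ equals the Svarc genus of the pullback of the path fibration $Y^I\to Y\times Y$ along the map $(f\circ \pi_X,\pi_Y)\colon X\times Y\to Y\times Y$. Concretely, this pullback is
$$E=\{((x,y),\alpha)\in (X\times Y)\times Y^I : \alpha(0)=f(x),\ \alpha(1)=y\},$$
fibered over $X\times Y$ by the obvious projection. By definition, $\cx(f)$ is the Svarc genus of $\pi\colon X^I\to X\times Y$, $\pi(\gamma)=(\gamma(0),f(\gamma(1)))$. Both quantities are thus sectional categories of fibrations with the same base $X\times Y$, and to prove the theorem it suffices to show that a given open $U\subset X\times Y$ admits a continuous section of $\pi$ if and only if it admits a continuous section of $E\to X\times Y$.

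The forward implication is immediate: if $s\colon U\to X^I$ is a section of $\pi$, then $\tilde{s}(x,y):=f\circ s(x,y)$ is a path in $Y$ from $f(x)$ to $y$, hence a section of $E$. For the reverse implication I would use the fact that $f$ is a fibration. Given a section $\tilde{s}\colon U\to Y^I$ of $E$, form the adjoint homotopy $H\colon U\times I\to Y$, $H((x,y),t)=\tilde{s}(x,y)(t)$, which agrees at $t=0$ with $f\circ h_0$, where $h_0\colon U\to X$ is $h_0(x,y)=x$. Applying the homotopy lifting property of $f$ to the pair $(h_0,H)$ yields a continuous lift $\widetilde{H}\colon U\times I\to X$ with $\widetilde{H}((x,y),0)=x$ and $f\circ \widetilde{H}=H$; its adjoint $s\colon U\to X^I$ satisfies $s(x,y)(0)=x$ and $f(s(x,y)(1))=y$, so $s$ is a section of $\pi$.

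Once the families of open sets admitting local sections on the two sides coincide, so do the minimal cardinalities of open covers by such sets, giving $\cx(f)=\Di(f\circ \pi_X,\pi_Y)$. The main (and really only) obstacle is the lifting step in the reverse implication, which is precisely where the hypothesis that $f$ is a fibration enters; without it, the continuous family of paths $\tilde{s}(x,y)$ in $Y$ starting at $f(x)$ could not be lifted continuously in $(x,y)$ to a family of paths in $X$ starting at $x$. The identification is otherwise purely formal once the pullback description from Proposition \ref{prop:genus_distance} is in hand.
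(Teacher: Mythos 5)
Your proof is correct and follows essentially the same strategy as the paper's: compare, open set by open set, sections of $\pi\colon X^I\to X\times Y$ with homotopies between $f\circ\pi_X$ and $\pi_Y$ (equivalently, sections of the pullback path fibration), using post-composition by $f$ in one direction and the homotopy lifting property of $f$ in the other. The only cosmetic difference is that you route the left-hand side through the Svarc-genus characterization of the homotopic distance (Proposition \ref{prop:genus_distance}) whereas the paper works directly from the definition of $\Di$ as homotopies on open sets; the key lifting step is identical.
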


\begin{proof}
First, suppose that there exists an open subset $U$ of $X\times Y$ and a continuous section $s\colon U\to X^I$ for $\pi$. Then we define a homotopy between $f\circ \pi_X$ and $\pi_Y$, $H\colon U\times I \to Y$ given by $H(x,y,t)=f(s(x,y)(t))$. Conversely, suppose that there exists an open subset $U$ of $X\times Y$ and a homotopy $H\colon U\times I \to Y$ between $f\circ \pi_X$ and $\pi_Y$. Consider the following commutative diagram:
$$\begin{tikzcd}
	U\times \{0\} \arrow[d,"i",hook,swap]\arrow[r,"\pi_X"] & X\arrow[d,"f"]\\
	U\times I \arrow[r,"H"] & Y
	\end{tikzcd}$$
Since $f$ is a fibration, there exists a lifting $\widetilde{H} \colon U\times I\to X$ of $H$,  which makes the diagram commutative. Now, we define a section $s\colon U\to P(X)$ of $\pi$, given by $s(x,y)(t)=\widetilde{H}(x,y,t)$.	
\end{proof}

\subsection{Weak category}
Yokoi (\cite{Y2018}) defined  the following {\em weak category} of a continuous map 
$f\colon X \to X$   from a topological space $X$ into itself, with respect to a subspace $A\subset X$.
This invariant is of interest when studying a notion of discrete Conley index of an isolated invariant set.

\begin{definition}
The {\em weak category}
of $f$ reduced to $A$, denoted by $c^\ast_A(f )$, is the smallest integer $n$ such that $A=U_0 \cup \cdots \cup   U_n$,
where the $U_i$ are open in $A$ and each restriction ${f^{k_i}}_{\vert U_i}\colon  U_i \to X$ is null-homotopic for
some $k_i$.
\end{definition}

As always, we have normalized it.
We found the following interpretation in terms of the homotopic distance.

\begin{proposition}If $X$ is path connected. and $x_0\in X$, then the weak category of $f$ reduced to $A$ equals
$$c^\ast_A(f)= \lim_{k \to \infty}\Di_X(A;f^k,x_0).$$
\end{proposition}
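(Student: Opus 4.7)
The plan is to unpack both sides in terms of open covers by sets on which iterates of $f$ are null-homotopic, observe that the relevant sequence is eventually constant, and then prove the two inequalities.

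First, I would translate the right-hand side. Since $X$ is path connected, the constant map $x_0\colon A \to X$ is homotopic to any other constant. Hence $\Di_X(A;f^k,x_0)$ is the least $n$ such that $A$ admits an open cover $U_0\cup\dots\cup U_n$ with the property that $f^k|_{U_i}$ is null-homotopic for every $i$. Next I would establish monotonicity of the sequence $a_k:=\Di_X(A;f^k,x_0)$ in $k$. If $f^k|_U$ is null-homotopic via a homotopy $H_t$, then $f\circ H_t$ is a homotopy showing that $f^{k+1}|_U=f\circ f^k|_U$ is also null-homotopic. Consequently any cover witnessing $a_k$ also witnesses $a_{k+1}\leq a_k$. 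Since the $a_k$ are non-negative integers, the sequence stabilizes and the limit is $L=\inf_k a_k=a_K$ for all sufficiently large $K$.

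Now I would prove $c^\ast_A(f)\leq L$ and $L\leq c^\ast_A(f)$ separately. For the first, pick $K$ with $a_K=L$ and a cover $A=U_0\cup\dots\cup U_L$ with each $f^K|_{U_i}$ null-homotopic; this is an admissible cover for the definition of $c^\ast_A(f)$ (with the constant choice $k_i=K$), so $c^\ast_A(f)\leq L$. For the reverse inequality, suppose $c^\ast_A(f)=n$ is witnessed by an open cover $A=U_0\cup\dots\cup U_n$ and integers $k_0,\dots,k_n$ with $f^{k_i}|_{U_i}$ null-homotopic. Set $K=\max_i k_i$. The key step, already used for monotonicity, is that if $f^{k_i}|_{U_i}$ is null-homotopic then so is $f^{K-k_i}\circ f^{k_i}|_{U_i}=f^K|_{U_i}$, since postcomposing a null-homotopy with $f^{K-k_i}$ gives a null-homotopy. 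Therefore $a_K\leq n$, and since $L\leq a_K$ by monotonicity we obtain $L\leq n=c^\ast_A(f)$.

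The only delicate point is verifying that a single $K$ can be chosen uniformly over all pieces of the cover witnessing $c^\ast_A(f)$; this is exactly the observation that null-homotopy is preserved under arbitrary postcomposition, which relies only on continuity of $f$ and imposes no further hypothesis. Path-connectedness of $X$ is used only to identify ``homotopic to the constant $x_0$'' with ``null-homotopic'' in the translation of $\Di_X$, making both formulations of the invariant compatible.
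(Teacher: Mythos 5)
Your proof is correct and takes essentially the same approach as the paper: establish monotonicity of $\Di_X(A;f^k,x_0)$ in $k$ by post-composing a null-homotopy of $f^k$ with $f$ (using path-connectedness to identify the constants $x_0$ and $f(x_0)$), then prove the two inequalities, with the uniformization $K=\max_i k_i$ handling the direction $\lim_k\Di_X(A;f^k,x_0)\leq c^\ast_A(f)$. The only cosmetic difference is that you make the translation of $\Di_X(A;f^k,x_0)$ into ``least size of an open cover on which $f^k$ is null-homotopic'' explicit at the outset, whereas the paper uses this implicitly.
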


\begin{proof}
First, observe that 
$$\Di_X(A;f^{k+1},x_0)=\Di_X(A;f^{k+1},f(x_0))\leq \Di_X(A;f^k,x_0),$$ 
because
$f^k \simeq x_0$ on $u_i$ implies $f^{k+1}\simeq f(x_0)$ on $U_i$, and the constant maps $x_0$ and $f(x_0)$ are homotopic.

If $c_A^\ast(f)\leq n$, we take an open covering $U_0\cup\cdots\cup U_n=A$ such that 
${f^{k_i}}_{\vert U_i}\simeq x_0$ for some $k_i$. By taking $k$ as the maximum of the $k_i$, we can assume that $k$ is the same for all $U_i$. Then $\Di_X(A;f^k,x_0)\leq n$, hence $\lim\nolimits_{k\to \infty} \Di(f^k,x_0)\leq n$.

On the other hand, if $\lim\nolimits_{k\to\infty} \Di_X(A;f^k,x_0)\leq n$, then
$\Di_X(A;f^k,x_0)\leq n$ for some integer $k$. By definition of subspace distance, there is an open covering $U_0\cup\cdots\cup U_n=A$ such that
${f^k}_{\vert U_i} \simeq x_0$ for all $i$. Then $c_A^\ast(f)\leq n$. \end{proof}

\section{Navigation functions}\label{NAVIG}
Navigation functions  
exploit the gradient flow of a Morse-Bott function for constructing motion
planning algorithms.  

Originally, Koditschek and Rimon  (\cite{KR1990}) studied machines that navigate to a fixed goal using a gradient flow technique. Later, Farber (\cite{Fa2008}) considered navigation functions, as  in Theorem \ref{TC-CRIT},
which depend on two variables, the source and the target. He gave an explicit description of motion planning algortihms based in a navigation function. We shall adapt his explanation to our generalized setting, described in Section \ref{MOTPLAN}, as a direct application of Theorem \ref{MAINCUT}. 

Assume that we have two continuous maps $f,g\colon M \to Y$, defined on the manifold $M$, and that a Morse-Bott function $\Phi\colon M \to \R$ is given, with critical values $c_1,\dots,c_p$. 
\begin{itemize}
    \item
Call $r_i$  the subspace distance $\Di_M(\Sigma_i;f,g)$, for each critical level $\Sigma_i$ of $\Phi$. Find a decomposition $G_1^i\cup \cdots G_{r_i}^i=\Sigma_i$ of $\Sigma_i$ into ENRs, which solves the generalized motion planning in $\Sigma_i$ for the restrictions of $f$ and $g$. 
\item
Consider the basins of attraction $V_j^i\subset M$ of each $G_j^i$, $i=1,\dots,p$, $j=1,\dots,r_i$. If $x\in V_j^i$, we can move $x$ following the trajectory $x(t)$ of the negative gradient flow (may be in an infinite time), arriving to some point $\alpha\in G_j^i\subset \Sigma_i$. The path $f(x(t))$ connects $f(x)$ to $f(\alpha)$. We then consider the path $\gamma$
on $Y$ from $f(\alpha)$ to $g(\alpha)$ which solves the motion problem for $\alpha$ in $G_j^i$.
\item
Finally, we consider the image $g(\bar x(t))$ by $g$ of the inverse path $\bar x(t)$.
\end{itemize}

Since $x'(t)=(\grad \Phi)_{x(t)}$ for all $t$, we can reparametrize the flow by considering the change of variable
$$s=\int_0^t \vert (\grad \Phi)_{x(t)}\vert dt.$$
In this way, the trajectory $x(s)$ reaches the critical submanifold $\Sigma_i$ in a finite time.

The same ideas can be applied to the cut locus of a submanifold, in order to apply Theorem \ref{MAINCUT}. 

See \cite[Section 3.2]{GMP2011}, \cite[Section 5]{MPT2020} or \cite[Remark 2.12, Lemma 3.18 and Theorem 3.28]{BP2020} for   explicit computations of gradient flows.

\section*{acknowledgements}
We are grateful to Daniel Tanr\'e for pointing to us reference \cite{BP2020}.

%
%


\begin{thebibliography}{10}
  \providecommand{\doi}[1]{DOI~\discretionary{}{}{}#1}

\bibitem{An2014}
Angulo-Ardoy, P.: Cut and conjugate points of the exponential map, with
  applications (2014).
\newblock Preprint arXiv:1411.3933

\bibitem{BH2004}
Banyaga, A., Hurtubise, D.E.: A proof of the {M}orse-{B}ott lemma.
\newblock Expo. Math. \textbf{22}(4), 365--373 (2004).
\newblock \doi{10.1016/S0723-0869(04)80014-8}.


\bibitem{BP2020}
Basu Somnath ;~Prasad, S.: A connection between cut locus, thom space and
  morse-bott functions (2020).
\newblock Preprint arXiv:2011.02972v1

\bibitem{BC2018}
{B{\l}aszczyk}, Z., {Carrasquel}, J.: {Topological complexity and efficiency of
  motion planning algorithms}.
\newblock {Rev. Mat. Iberoam.} \textbf{34}(4), 1679--1684 (2018)

\bibitem{Br1993}
Bredon, G.E.: Topology and geometry, \emph{Graduate Texts in Mathematics}, vol.
  139.
\newblock Springer-Verlag, New York (1993).
\newblock \doi{10.1007/978-1-4757-6848-0}.


\bibitem{Bu1977}
{Buchner}, M.A.: {Simplicial structure of the real analytic cut locus}.
\newblock {Proc. Am. Math. Soc.} \textbf{64}, 118--121 (1977)

\bibitem{CLOT2003}
Cornea, O., Lupton, G., Oprea, J., Tanr\'{e}, D.: Lusternik-{S}chnirelmann
  category, \emph{Mathematical Surveys and Monographs}, vol. 103.
\newblock American Mathematical Society, Providence, RI (2003).
\newblock \doi{10.1090/surv/103}.


\bibitem{Co2010}
Costa, A.E.: Topological complexity of configuration spaces.
\newblock Ph.D. thesis, Durham E-Theses (2010)

\bibitem{Cr1962}
{Crittenden}, R.: {Minimum and conjugate points in symmetric spaces}.
\newblock {Can. J. Math.} \textbf{14}, 320--328 (1962)

\bibitem{Di2008}
tom Dieck, T.: Algebraic topology.
\newblock EMS Textbooks in Mathematics. European Mathematical Society (EMS),
  Z\"{u}rich (2008).
\newblock \doi{10.4171/048}.


\bibitem{Do1972}
Dold, A.: Lectures on algebraic topology.
\newblock Springer-Verlag, New York-Berlin (1972).
\newblock Die Grundlehren der mathematischen Wissenschaften, Band 200

\bibitem{Fa2003}
Farber, M.: Topological complexity of motion planning.
\newblock Discrete Comput. Geom. \textbf{29}(2), 211--221 (2003).
\newblock \doi{10.1007/s00454-002-0760-9}.


\bibitem{Fa2008}
Farber, M.: Invitation to topological robotics.
\newblock Zurich Lectures in Advanced Mathematics. European Mathematical
  Society (EMS), Z\"{u}rich (2008).
\newblock \doi{10.4171/054}.


\bibitem{GHL1990}
Gallot, S., Hulin, D., Lafontaine, J.: Riemannian geometry, third edn.
\newblock Universitext. Springer-Verlag, Berlin (2004).
\newblock \doi{10.1007/978-3-642-18855-8}.


\bibitem{GS1978}
{Gluck}, H., {Singer}, D.: {Scattering of geodesic fields, I}.
\newblock {Ann. Math. (2)} \textbf{108}, 347--372 (1978)

\bibitem{GMP2011}
G\'{o}mez-Tato, A., Mac\'{\i}as-Virg\'{o}s, E., Pereira-S\'{a}ez, M.J.: Trace
  map, {C}ayley transform and {LS} category of {L}ie groups.
\newblock Ann. Global Anal. Geom. \textbf{39}(3), 325--335 (2011).
\newblock \doi{10.1007/s10455-010-9239-8}.


\bibitem{Ha2002}
{Hatcher}, A.: {Algebraic topology}.
\newblock Cambridge: Cambridge University Press (2002)

\bibitem{IT2001}
{Itoh}, J.-I., {Tanaka}, M.: {The Lipschitz continuity of the distance
  function to the cut locus}.
\newblock {Trans. Am. Math. Soc.} \textbf{353}(1), 21--40 (2001)

\bibitem{KM2011}
Kadzisa, H., Mimura, M.: Morse-{B}ott functions and the
  {L}usternik-{S}chnirelmann category.
\newblock J. Fixed Point Theory Appl. \textbf{10}(1), 63--85 (2011).
\newblock \doi{10.1007/s11784-010-0041-9}.


\bibitem{Ko1989}
{Kobayashi}, S.: {On conjugate and cut loci}.
\newblock {Global differential geometry, MAA Stud. Math. 27, 140-169} (1989)

\bibitem{KN1967}
Kobayashi, S., Nomizu, K.: Foundations of differential geometry. {V}ol.
  {I}-{II}.
\newblock Wiley Classics Library. John Wiley \& Sons, Inc., New York (1996).
\newblock Reprint of the 1963-1969 original, A Wiley-Interscience Publication

\bibitem{KR1990}
Koditschek, D.E., Rimon, E.: Robot navigation functions on manifolds with
  boundary.
\newblock Adv. in Appl. Math. \textbf{11}(4), 412--442 (1990).
\newblock \doi{10.1016/0196-8858(90)90017-S}.


\bibitem{LS1934}
{Lusternik}, L., {Schnirelmann}, L.: {M\'ethodes topologiques dans les
  probl\`emes variationnels. I. Pt. Espaces \`a un nombre fini de dimensions.
  Traduit du russe par J. Kravtchenko}.
\newblock {Actualit\'es Scientifiques et Industrielles. 188. Expos\'es sur
  l'analyse math\'ematique et ses applications III. Publi\'es par J. Hadamard.
  Paris: Hermann \& Cie. 51 S., 5} (1934)

\bibitem{MP2013}
Mac\'{\i}as-Virg\'{o}s, E., Pereira-S\'{a}ez, M.J.: An upper bound for the
  {L}usternik-{S}chnirelmann category of the symplectic group.
\newblock Math. Proc. Cambridge Philos. Soc. \textbf{155}(2), 271--276 (2013).
\newblock \doi{10.1017/S0305004113000200}.


\bibitem{MPT2017}
Mac\'{\i}as-Virg\'{o}s, E., Pereira-S\'{a}ez, M.J., Tanr\'{e}, D.: Morse theory
  and the {L}usternik-{S}chnirelmann category of quaternionic {G}rassmannians.
\newblock Proc. Edinb. Math. Soc. (2) \textbf{60}(2), 441--449 (2017).
\newblock \doi{10.1017/S0013091516000195}.


\bibitem{MPT2020}
{Mac\'{\i}as-Virg\'os}, E., {Pereira-S\'aez}, M.J., {Tanr\'e}, D.: {Non-linear
  Morse-Bott functions on quaternionic Stiefel manifolds}.
\newblock {Indag. Math., New Ser.} \textbf{31}(6), 968--983 (2020)

\bibitem{MM2020}
Mac\'ias-Virg\'os, E., Mosquera-Lois, D.: Homotopic distance between maps.
\newblock Mathematical Proceedings of the Cambridge Philosophical Society p.
  1--21 (2021).
\newblock \doi{10.1017/S0305004121000116}

\bibitem{MW2019}
Murillo, A., Wu, J.: Topological complexity of the work map.
\newblock Journal of Topology and Analysis \textbf{13}(01), 219--238 (2021).
\newblock \doi{10.1142/S179352532050003X}

\bibitem{Ni2007}
Nicolaescu, L.I.: An invitation to {M}orse theory.
\newblock Universitext. Springer, New York (2007)

\bibitem{Pa2017}
Pave\v{s}i\'{c}, P.: Complexity of the forward kinematic map.
\newblock Mech. Mach. Theory \textbf{A}, 230--243 (2017).


\bibitem{Pa2019}
Pave\v{s}i\'{c}, P.: Topological complexity of a map.
\newblock Homology Homotopy Appl. \textbf{21}(2), 107--130 (2019).
\newblock \doi{10.4310/HHA.2019.v21.n2.a7}.


\bibitem{Re1972}
Reeken, M.: Stability of critical points under small perturbations. {I}.
  {T}opological theory.
\newblock Manuscripta Math. \textbf{7}, 387--411 (1972).
\newblock \doi{10.1007/BF01644075}.


\bibitem{RS2003}
Rudyak, Y.B., Schlenk, F.: Lusternik-{S}chnirelmann theory for fixed points of
  maps.
\newblock Topol. Methods Nonlinear Anal. \textbf{21}(1), 171--194 (2003).
\newblock \doi{10.12775/TMNA.2003.011}.


\bibitem{Sakai}
Sakai, T.: Riemannian geometry.
\newblock Translations of Mathematical Monographs, 149. American Mathematical
  Society (1996)

\bibitem{Sc2020}
Scott, J.: On the topological complexity of maps (2020).
\newblock Preprint arXiv:2011.10646

\bibitem{Y2018}
{Yokoi}, K.: {Lusternik-Schnirelmann category based on the discrete Conley
  index theory}.
\newblock {Glasg. Math. J.} \textbf{61}(3), 693--704 (2019)




\end{thebibliography}
\end{document}